   \numberwithin{equation}{section}
   \newtheorem{lemma}{Lemma}[section]
   \newtheorem{theorem}[lemma]{Theorem}
   \newtheorem{remark}[lemma]{Remark}
   \newtheorem{definition}[lemma]{Definition}
   \newcommand{\EX}{{\Bbb{E}}}
   \newcommand{\PX}{{\Bbb{P}}}
\newcommand{\cF}{{\cal F}}
\newcommand{\PP}{{\mathbb P}}
\newcommand{\s}{\sigma}
\newcommand{\e}{\varepsilon}
\newcommand{\om}{\omega}
\newcommand{\Om}{\Omega}
\newcommand{\D}{\Delta}
\newcommand{\de}{\delta}
\newcommand{\p}{\partial}
\renewcommand{\k}{\kappa}
\renewcommand{\cF}{\mathcal F}
\title
[Stochastic Two-Layer  Geophysical Flows] {Small probability events
for two-layer geophysical flows under uncertainty
}
\author{Aijun Du}
   \address[Aijun Du]
   {Department of Applied Mathematics\\
   Illinois Institute of Technology\\
   Chicago, IL 60616, USA}
  \author{Jinqiao Duan}
   \address[Jinqiao Duan]
   {Department of Applied Mathematics\\
   Illinois Institute of Technology\\
   Chicago, IL 60616, USA}
   \email
   [Jinqiao Duan]{duan@iit.edu}
 \author{Hongjun Gao}
   \address[Hongjun Gao]
   {Department of Mathematics\\
   Nanjing Normal University\\
   Nanjing, China}
   \email
   [Hongjun Gao]{gaohj@njnu.edu.cn}
\date{October 15, 2008}
\subjclass[2000]{Primary 60H15, 76U05; Secondary  86A05, 34D35}
\keywords{Stochastic partial differential equations, stochastic
geophysical flow models, large deviations, random dynamical
systems }
\begin{document}
\vsize=8.6in \hsize=5.6in

\begin{abstract}

The stochastics two-layer quasi-geostrophic  flow model is an
intermediate system between the single-layer two dimensional
barotropic flow model and the continuously stratified  three
dimensional  baroclinic flow model. This model is widely used to
investigate basic mechanisms in geophysical flows, such as
baroclinic effects, the Gulf Stream   and subtropical gyres.

A large deviation principle for the  two-layer quasi-geostrophic
flow model under uncertainty is proved. The proof is based on  the
Laplace principle and  a variational approach. This approach does
not require the exponential tightness estimates which are needed
in other methods for establishing large deviation principles.

\end{abstract}

 \maketitle


\section{Introduction}\label{s1}

The continuously stratified, three dimensional (3D) baroclinic
quasi-geostrophic flow model describes large scale geophysical
fluid motions in the atmosphere and oceans. This model is much
simpler than the primitive flow model or the rotating
Navier-Stokes flow model. When the fluid density  is approximately
constant, this model reduces to the barotropic, single-layer,  two
dimensional (2D) quasi-geostrophic model. The two-layer
quasi-geostrophic  flow model,  in which the fluid consists of two
homogeneous fluid layers of  uniform but distinct densities
$\rho_1$ and $\rho_2$, is an intermediate system between the
single-layer 2D barotropic flow model and the continuously
stratified, 3D baroclinic flow model.
\\

The two-layer quasi-geostrophic flow model  has been used as a
theoretical and   numerical model to understand basic mechanisms
in large scale geophysical flows, such as baroclinic effects
\cite{Ped87}, wind-driven circulation \cite{Berloff, Berloff2},
the Gulf Stream \cite{Huang}, fluid stability \cite{Benilov} and
subtropical gyres \cite{Oz}.
 Recently Salmon \cite{Salmon} introduced a generalized
two-layer ocean  flow model.
\\

We prove a large deviation  principle for this stochastic infinite
dimensional system, by a recent weak convergence approach, based
on a variational representation for functionals of infinite
dimensional Brownian motion \cite{BD00, BD07}. In this approach,
the large deviations for SPDEs  are derived by
  showing some qualitative properties (well-posedness, compactness
  and weak convergence) of certain perturbations of
  the original SPDEs. This  method has been
  recently applied in  several papers on
  SPDEs \cite{Sundar, WangDuan, DuanMillet} or SDEs
  in infinite dimensions \cite{Ren}.\\

More information about this weak convergence approach for large deviations
   in the finite dimensional setting can be found in the book \cite{Dupuis}.
It is different from
  other existing approaches,
  which usually require extra exponential tightness estimates,
  for establishing large deviation principles for
  SPDEs
  \cite{CardonWeber, Cerrai-Rockner, Chang, ChenalMillet,  Chow, FW92, Kallian-Xiong, Peszat, Sowers, Zab}.
An alternative approach \cite{Feng} for large deviations is based
on nonlinear semi-group theory and infinite dimensional
Hamilton-Jacobi equations; it also requires to
establish exponential tightness.\\

This paper is organized as follows. The mathematical formulation
for the stochastic two-layer  geophysical flow model is in the
next section. Then the well-posedness for the model is discussed
in \S \ref{s3}. Finally, a large deviation principle is shown in
\S \ref{s4}.

\section{Mathematical setup} \label{s2}

We consider the two-layer quasi-geostrophic flow model
(\cite{Ped87}, p. 423; \cite{Salmon}, p.87):

\begin{equation}\label{q1}
\begin{split}
\frac{\partial q_1}{\partial t} + J(\psi_1, q_1 + \beta  y )
        &  = \nu \D^2  \psi_1  + f   +  \sqrt{\epsilon}\sigma_1(q_1,q_2)\dot{W^1},\\
\frac{\partial q_2}{\partial t} + J(\psi_2, q_2 + \beta  y )
       &  = \nu \D^2  \psi_2  -r \D  \psi_2 +  \sqrt{\epsilon}\sigma_2(q_1,q_2)\dot{W^2},
\end{split}
\end{equation}
with boundary condition
\begin{equation*}
\psi_1 =  \psi_2 = 0   \;,\\ \;q_1 = q_2 = 0,
\end{equation*}

where potential vorticities $q_1(x, y,t) $, $q_2(x, y,t)$ for
 the
top layer and the bottom layer are
defined via stream functions $\psi_1(x, y,t)$, $\psi_2(x, y,t)$, respectively,
\begin{equation}\label{pv1}
\begin{split}
 q_1 &=  \D \psi_1 - F_1  \cdot (\psi_1 - \psi_2),
\\
 q_2 &=  \D \psi_2 - F_2  \cdot (\psi_2 - \psi_1).
\end{split}
\end{equation}

\begin{remark}
The boundary conditions $\  \psi_1 =  \psi_2 = 0   \; , q_1 = q_2
= 0 \; $ give: $\   \D \psi_1  =  \D \psi_2 = 0$ on the boundary.
\end{remark}

Here  $x,\,y $ are  Cartesian coordinates in zonal (east), meridional (north) directions,  respectively; $(x,y)\in
D:=(0,L)\times (0,L)$, where $L$ is a positive number;
 $F_1, F_2$ are positive constants defined by
\begin{equation*}
\begin{split}
F_1 = \frac{f_0^2 }{gh_1}\frac{\rho_0}{\rho_2-\rho_1},\\
F_2 = \frac{f_0^2 }{gh_2}\frac{\rho_0}{\rho_2-\rho_1},
\end{split}
\end{equation*}
with $g$ the gravitational acceleration;
$h_1, h_2$ the depth of top and bottom layers,
  $\rho_1, \rho_2$ the densities  ($\rho_2> \rho_1$)
of top and bottom layers, respectively;  and
$L, \rho_0$ the characteristic scales for horizontal
length and density of the flows,
 respectively;
 $f_0 +\beta y$ (with $f_0, \beta$ constants) is the Coriolis
parameter and $\beta$ is the meridional gradient of the
Coriolis  parameter;   $ \nu > 0$ is  the viscosity.
Note that  $r = f_0 \frac{\delta_E}{2(h_1+h_2)}$ is the  Ekman
constant  which
 measures the intensity of   friction at the bottom boundary layer
(the so-called Ekman layer) or the rate for
vorticity decay due to the friction in the Ekman layer. Here
$\delta_E= \sqrt{2\nu/f_0}$ is the Ekman layer thickness
(\cite{Ped87}, p.188).
   Moreover,  $J(h, g)
=h_xg_y -h_yg_x$ is the Jacobi operator and $\D=\p_{xx} +\p_{yy}$ is the Laplace operator.

Now, we set $W =\left( \begin{array}{cc}
W^1 \\
W^2\end{array} \right)\;$ and $\dot{W} =\left( \begin{array}{cc}
\dot{W^1}\\
\dot{W^2} \end{array} \right)$. An important part of the above
flow model (\ref{q1})is the white noise term $\dot{W}$, which is
the generalized  derivative of a Wiener process $W(t)$ with
respect to time $t$, in an appropriate function space to be
specified below. This white noise term  $\left( \begin{array}{cc}
\sqrt \e \s_1\dot{W^1}\\
\sqrt \e \s_2\dot{W^2} \end{array} \right)\;$, with $\e>0$ a small
parameter and $\s_i$ noise intensity, describes the fluctuating
part of the external wind forcing in both of the fluid layers; see
Arnold \cite{Arn00}. The fluctuating part  is usually of a shorter
time scale than the response time scale of the large scale
quasi-geostrophic flows. So we neglect the autocorrelation time of
this fluctuating process. We thus assume the noise is white in
time but it is allowed to be colored in space, i.e., it may be
correlated in space variables $x$ and $y$. The Wiener process
(also a Gaussian process) $W(t)$ has zero mean and is
characterized by its covariance operator $Q$. There has been some
analysis on wind stress curl data from the National Aeronautics
and Space Administration Scatterometer (NSCAT) and from the
National Center for Environmental Prediction (NCEP); see, for
example, \cite{Milliff, Chin}.  Such data analysis also involves
estimating the covariance and its trace, and the trace is usually
taken to be finite. In this paper, we consider the case when
 the covariance operator $Q$ of  the
Wiener process has a finite trace.\\

In the following, $L^2(D)$, $V=H^1_0(D)$  denote the standard
scalar Lebesgue and Sobolev spaces. Let $\mathcal{D}(A) =
\mathbb{H}$$^2(D) \bigcap \mathbb{H}$$^1_0(D)$, where
$\mathbb{H}$$ = L^2(D) \times L^2(D)$  and $\mathbb{H}$$^1_0(D)=
H^1_0(D) \times H^1_0(D)$ are product vector spaces. The scalar
product and the induced norm in $ L^2(D)$ or $\mathbb{H}$
are denoted as $(\cdot, \cdot)$ and $\|\cdot\|$, respectively. \\

$W(t)$ is a Wiener processes  defined  on a filtered probability
space $(\Om, \cF, \cF_t, \PX)$, taking values in $\mathbb{H}$. We
denote $Q$ as its associated covariance operator, it is a linear
symmetric positive covariant operator in the Hilbert space
$\mathbb{H}$. We assume that $Q$ is trace class, i.e., $tr(Q) <
\infty$.

As in \cite{PZ92} and \cite{Sundar}, let ${\mathbb{H}}_0 = Q^{\frac12}{\mathbb{H}}$. Then ${\mathbb{H}}_0$ is a Hilbert space with the scalar product
$$
(q, \psi)_0 = (Q^{-\frac12}q, Q^{-\frac12}\psi),\; \forall q, \psi \in {\mathbb{H}}_0,
$$
together with the induced norm $|\cdot|_0=\sqrt{(\cdot,
\cdot)_0}$\;. The embedding $i:{\mathbb{H}}_0 \to  {\mathbb{H}}$
is Hilbert-Schmidt and hence compact, and moreover, $i \; i^* =Q$.

Let $L_Q$ be the space of linear operators $S$ such that
$SQ^{\frac12}$ is a Hilbert-Schmidt operator (and thus a compact
operator) from ${\mathbb{H}}$ to ${\mathbb{H}}$. The norm in the
space $L_Q$ is $\|S\|_{L_Q} =tr (SQS^*)$, where $S^*$ is the
adjoint operator of $S$.

 With these notations, the above two-layer system can be rewritten as:

\begin{equation}
\begin{split}
\frac{\partial q}{\partial t} &= [Aq + F(q)]  +  \sqrt{\epsilon}\sigma(q)\dot{W},\\
q(0) &= \xi,
\end{split}
\end{equation}
or
\begin{equation}\label{q2}
\begin{split}
dq &= [Aq + F(q)]dt  +  \sqrt{\epsilon}\sigma(q)dW ,\\
q(0) &= \xi,
\end{split}
\end{equation}

 where
\begin{equation*}
q = \left( \begin{array}{cc}
q_1  \\
q_2  \end{array} \right), \\
\;\sigma(q)\dot{W} = \left( \begin{array}{cc}
\sigma_1(q_1,q_2)\dot{W^1}  \\
\sigma_2(q_1,q_2)\dot{W^2}  \end{array} \right),
\end{equation*}

\begin{equation*}
Aq =\left( \begin{array}{cc}
-\nu \D^2  \psi_1 \\
-\nu \D^2  \psi_2   \end{array} \right),
\end{equation*}

\begin{equation*}
F(q) =\left( \begin{array}{cc}
-J(\psi_1, q_1 + \beta  y   ) +  f \\
-J(\psi_2, q_2 + \beta  y )    -r \D  \psi_2 \end{array} \right).
\end{equation*}

Here $q_1,q_2$ and $\psi_1,\psi_2$ are defined in \ref{pv1}.

In order to obtain the weak solution, the noise intensity $\s:
{\mathbb{H}} \to L_Q({\mathbb{H}}_0, {\mathbb{H}})$ is assumed to
satisfy the following conditions.

 \medskip

\textbf{Assumption A:}\\
\\
A.1.  The noise intensity $\sigma(\cdot) : {\mathbb{H}} \rightarrow L_Q({\mathbb{H}}_0; {\mathbb{H}})$ is continuous.\\
\\
A.2. $\|\s(q)\|^2_{L_Q({\mathbb{H}}_0;  \mathbb{H})} \leq K
(1+\|q\|_\mathbb{H}^2),$
for some positive constant $K$. \\
\\
A.3. There exists a constant L such that for all $q, \psi \in
{\mathbb{H}}$, we have
$\|\s(q)-\s(\hat{q})\|^2_{L_Q({\mathbb{H}}_0;{\mathbb{H}})} \leq L
\|q -\hat{q} \|_{\mathbb{H}}^2$.\\

For strong solution, the noise intensity $\s$ is assumed to
satisfy the following additional conditions.

 \medskip

\textbf{Assumption A':}\\
\\
A'.1.  The noise intensity $\sigma(\cdot) : {\mathbb{H}} \rightarrow L_Q({\mathbb{H}}_0; {\mathbb{H}_0^1})$ is continuous.\\
\\
A'.2. $\|\s(q)\|^2_{L_Q({\mathbb{H}}_0;  \mathbb{H}_0^1)} \leq K'
(1+\|q\|_{\mathbb{H}_0^1}^2),$
for some positive constant $K'$. \\


\bigskip


\section{Well-posedness }\label{s3}

\subsection{Well-posedness of two-layer system}

To treat the nonlinearity in the two-layer fluid model we need the
following lemmas:

\begin{lemma} \label{Jacobi}
The Jacobian operator has the following properties:
\begin{align}
&J(u,v) = - J(v,u),\qquad (J(u,v),v)  = 0,\\
&(J(u,v),w) = (J(v,w),u),
\end{align}
for $u,v,w$ in $H^1$. Moreover the following estimates hold:
\begin{equation}\label{jac-3}
|( J(u,v),\D u)|\le c_0\|\D v\|\cdot\|\nabla u\|\cdot \|\D
u\|,\quad u,\,v\in H^2;
\end{equation}
\begin{equation*}
|( J(u,v),w)|\le c_1\|\D u\|\cdot \|\D v\|\cdot \|w\|,\quad
u,\,v\in H^2,\,w\in L^2;
\end{equation*}
\begin{equation}\label{jac-2a}
|(J(u,v),w)|\le c_1\|\nabla u\|\cdot \|\D v\|\cdot\|\nabla
w\|,\quad u,\,w\in H^1\,, v\in H^2.
\end{equation}
\end{lemma}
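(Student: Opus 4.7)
The first three identities are essentially algebraic. Antisymmetry is immediate from $J(u,v)=u_xv_y-u_yv_x$. For $(J(u,v),v)=0$, write $v\,v_y=\tfrac12\p_y(v^2)$ and $v\,v_x=\tfrac12\p_x(v^2)$ to obtain $(J(u,v),v)=\int_D J(u,v^2/2)\,dxdy$; the divergence theorem then reduces this to a boundary integral that vanishes because $v=0$ on $\p D$. The cyclic identity $(J(u,v),w)=(J(v,w),u)$ follows by polarizing $(J(u,v+w),v+w)=0$ and applying antisymmetry.

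The two direct H\"older estimates (5) and (6) rest on the planar Ladyzhenskaya inequality $\|f\|_{L^4}^2\le C\|f\|\cdot\|\nabla f\|$ together with Poincar\'e and the elliptic regularity bound $\|\nabla^2 u\|\le C\|\D u\|$ on $H^2\cap H^1_0$. For (5), H\"older gives $|(J(u,v),w)|\le\|\nabla u\|_{L^4}\|\nabla v\|_{L^4}\|w\|$, and Ladyzhenskaya bounds each $L^4$ factor by $C\|\D u\|$ and $C\|\D v\|$ respectively. For (6), first use the cyclic rewriting $(J(u,v),w)=(J(v,w),u)$ and the H\"older split
\[
|(J(v,w),u)|\le\|\nabla v\|_{L^4}\|\nabla w\|\,\|u\|_{L^4},
\]
and then absorb the auxiliary factors into $\|\D v\|\|\nabla w\|\|\nabla u\|$ by Ladyzhenskaya, Poincar\'e and elliptic regularity.

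The central step is (4); a naive H\"older split produces only $C\|\D u\|^{3/2}\|\nabla u\|^{1/2}\|\D v\|^{1/2}\|\nabla v\|^{1/2}$, off by half a power of $\|\D u\|$, so I would instead integrate by parts. Since $u$ and $v$ vanish on the rectangular boundary, on each side one component of $\nabla u$ and one of $\nabla v$ vanishes, which forces $J(u,v)=0$ on $\p D$; hence $(J(u,v),\D u)=-\int_D \nabla J(u,v)\cdot\nabla u\,dxdy$. Applying $\p_i J(u,v)=J(\p_iu,v)+J(u,\p_iv)$ and reorganizing gives
\[
\nabla J(u,v)\cdot\nabla u=J\!\bigl(\tfrac12|\nabla u|^2,\,v\bigr)+(u_x^2-u_y^2)v_{xy}+u_xu_y(v_{yy}-v_{xx}).
\]
The $J(\tfrac12|\nabla u|^2,v)$-term integrates to zero by the same boundary argument used for $(J(u,v),v)=0$ (again $v=0$ on $\p D$), and the two remaining terms are controlled by $C\|\nabla^2 v\|\,\|\nabla u\|_{L^4}^2\le C\|\D v\|\|\nabla u\|\|\D u\|$. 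The main obstacle is recognizing that one must first transfer a derivative off $\D u$ and then spotting the cancellation of the $J(\tfrac12|\nabla u|^2,v)$-term; once that algebraic identity is in place the remaining estimates are standard 2D interpolation.
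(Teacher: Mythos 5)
The paper itself does not prove this lemma --- it simply cites the reference on 3D quasigeostrophic dynamics --- so there is no in-text proof to compare against line by line. Your argument is nevertheless correct and follows the standard route. The algebraic identities hold as you argue, with the understanding that $u,v,w$ vanish on $\partial D$ (i.e.\ lie in $H^1_0$; the statement's bare ``$H^1$'' is implicitly $H^1_0$ in this paper's setting, and this is what kills the boundary terms in your divergence argument). For \eqref{jac-3} the integration by parts is the right move: the boundary term $\int_{\partial D}J(u,v)\,\partial_n u\,ds$ vanishes because on each side of the rectangle one factor in each term of $J(u,v)$ is a tangential derivative of a function that vanishes there; the product-rule identity
\[
\nabla J(u,v)\cdot\nabla u
  = J\!\bigl(\tfrac12|\nabla u|^2,\,v\bigr)
  + (u_x^2-u_y^2)\,v_{xy} + u_xu_y\,(v_{yy}-v_{xx})
\]
checks out by direct expansion; the $J(\tfrac12|\nabla u|^2,v)$ piece integrates to zero since $v\in H^1_0$; and the remainder is bounded by $\|\nabla^2 v\|\,\|\nabla u\|_{L^4}^2\le C\|\Delta v\|\,\|\nabla u\|\,\|\Delta u\|$ via Ladyzhenskaya and elliptic regularity $\|\nabla^2\cdot\|\le C\|\Delta\cdot\|$ on $H^2\cap H^1_0(D)$. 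For the two remaining estimates your H\"older--Ladyzhenskaya splits are correct; the only superfluous step is the cyclic rewriting used for \eqref{jac-2a} --- the direct split $|(J(u,v),w)|\le\|\nabla u\|\,\|\nabla v\|_{L^4}\,\|w\|_{L^4}$ already yields $c_1\|\nabla u\|\,\|\Delta v\|\,\|\nabla w\|$ after Ladyzhenskaya and Poincar\'e, with no need to invoke the identity $(J(u,v),w)=(J(v,w),u)$.
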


The detailed proof of the above lemma can be found in
\cite{DuanFursikov}.

\begin{lemma}\label{bound}
Let $F_1$ and $ F_2$ be positive constants, and assume that
$q_i,\psi_i , (i=1,2)$ satisfy
\begin{equation*}
\begin{split}
 q_1 &=  \D \psi_1 - F_1  \cdot (\psi_1 - \psi_2),
\\
 q_2 &=  \D \psi_2 - F_2  \cdot (\psi_2 - \psi_1).\\
\end{split}
\end{equation*}
Then we have
\begin{equation*}
\|\psi_1\|^2_{H^2} + \|\psi_2\|^2_{H^2} \leq C (\|q_1\|^2+ \|q_2\|^2).
\end{equation*}
\end{lemma}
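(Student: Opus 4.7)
The plan is to diagonalize the algebraic-differential system relating $(q_1,q_2)$ to $(\psi_1,\psi_2)$, reducing it to two independent Dirichlet problems on the square $D$, and then to invoke standard elliptic $H^2$ regularity. Concretely, multiplying the first defining relation by $F_2$, the second by $F_1$, and adding makes the coupling terms $\pm F_1F_2(\psi_1-\psi_2)$ cancel, so setting $\eta:=F_2\psi_1+F_1\psi_2$ one obtains
\begin{equation*}
\Delta\eta = F_2 q_1+F_1 q_2,\qquad \eta|_{\partial D}=0.
\end{equation*}
Subtracting the two relations instead gives, with $\phi:=\psi_1-\psi_2$,
\begin{equation*}
\Delta\phi-(F_1+F_2)\phi = q_1-q_2,\qquad \phi|_{\partial D}=0.
\end{equation*}

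I would then apply the standard $H^2$-regularity estimate for $-\Delta$ with homogeneous Dirichlet data on the convex domain $D=(0,L)\times(0,L)$: every $u\in H^2\cap H^1_0(D)$ satisfies $\|u\|_{H^2}\le C\|\Delta u\|$. Applied directly to $\eta$, this yields $\|\eta\|_{H^2}\le C(\|q_1\|+\|q_2\|)$. For $\phi$, the same regularity combined with the coercivity of $-\Delta+(F_1+F_2)I$ (testing against $\phi$ gives $\|\phi\|_{H^1}\le C\|q_1-q_2\|$, and then $\|\Delta\phi\|\le\|q_1-q_2\|+(F_1+F_2)\|\phi\|$) produces $\|\phi\|_{H^2}\le C(\|q_1\|+\|q_2\|)$.

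The linear map $(\psi_1,\psi_2)\mapsto(\eta,\phi)$ has determinant $-(F_1+F_2)\ne 0$, so it inverts to
\begin{equation*}
\psi_1=\frac{\eta+F_1\phi}{F_1+F_2},\qquad \psi_2=\frac{\eta-F_2\phi}{F_1+F_2},
\end{equation*}
and the triangle inequality then delivers the target bound $\|\psi_1\|_{H^2}^2+\|\psi_2\|_{H^2}^2\le C(\|q_1\|^2+\|q_2\|^2)$. The only subtle point is the $H^2$-estimate for the Dirichlet Laplacian on $D$; since the square is a convex polygon this is classical, and if one wanted a self-contained argument, expanding in the Fourier sine basis $\sin(k\pi x/L)\sin(m\pi y/L)$ simultaneously diagonalizes $-\Delta$ and makes the norm equivalence $\|u\|_{H^2}^2\sim\|\Delta u\|^2$ on $H^2\cap H^1_0(D)$ completely explicit.
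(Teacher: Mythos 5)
Your proof is correct, but it takes a genuinely different route from the paper's. The paper works directly with the unknowns $\psi_1,\psi_2$: it tests the first defining relation against $\Delta\psi_1$ and the second against $\Delta\psi_2$, multiplies by $F_2$ and $F_1$ respectively, adds, and observes that the resulting cross terms assemble into the manifestly non-negative quantity $F_1F_2\int_D|\nabla\psi_1-\nabla\psi_2|^2\,dx$, which can be discarded; Young's inequality and absorption then give $\|\Delta\psi_1\|^2+\|\Delta\psi_2\|^2\le C(\|q_1\|^2+\|q_2\|^2)$, after which the same Dirichlet $H^2$-regularity estimate on the square you invoke closes the argument. Your approach instead diagonalizes the linear system first, introducing $\eta=F_2\psi_1+F_1\psi_2$ and $\phi=\psi_1-\psi_2$ so that the coupling disappears entirely, and then treats the two scalar elliptic problems $\Delta\eta=F_2q_1+F_1q_2$ and $(\Delta-(F_1+F_2))\phi=q_1-q_2$ independently. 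Both proofs hinge on the same key input (the $H^2\!\cap\!H^1_0$ regularity for the Laplacian on the convex domain $D$, which the paper cites from Ladyzhenskaya), so they are of comparable length; what yours buys is structural transparency, exhibiting the decoupled barotropic and baroclinic modes $\eta,\phi$ explicitly, which is the standard normal-form decomposition in two-layer quasi-geostrophy and would also clarify later nonlinear estimates, whereas the paper's energy argument is more self-contained and avoids the change of variables.
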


\begin{proof}
On any finite interval $t \in [0,T]$, we take inner product on
both sides of the equations with $\triangle \psi_1$ and $\triangle
\psi_2$ respectively :
\begin{equation*}
\begin{split}
\int_D q_1 \D\psi_1 dx &=  \int_D |\D \psi_1|^2 dx + F_1 \int_D
|\nabla \psi_1|^2dx - F_1 \int_D \nabla\psi_2 \nabla \psi_1dx,
\\
\int_D q_2 \D\psi_2 dx &=  \int_D |\D \psi_2|^2 dx + F_2 \int_D
|\nabla \psi_2|^2dx - F_2 \int_D \nabla\psi_2 \nabla \psi_1dx.\\
\end{split}
\end{equation*}

Multiply both sides of first equation by $F_2$, second equation by
$F_1$ , then add them together :
\begin{eqnarray*}
& & F_2\int_D | \D\psi_1|^2 dx + F_1 \int_D |\D\psi_2|^2 dx\\
&+& F_1F_2\int_D (|\nabla \psi_2|^2 + |\nabla \psi_1|^2 -
2\nabla\psi_2
\nabla \psi_1) dx \\
&=& F_2 \int_D q_1 \D\psi_1dx + F_1 \int_D \D \psi_2 q_2 dx.
\end{eqnarray*}

Noticing that $F_1F_2\int_D (|\nabla \psi_2|^2 + |\nabla \psi_1|^2
- 2\nabla\psi_2 \nabla \psi_1) dx$ is always non-negative, we
have:
\begin{eqnarray*}
& & F_2\int_D | \D\psi_1|^2 dx + F_1 \int_D |\D\psi_2|^2 dx\\
&\leq& F_2 \int_D q_1 \D\psi_1dx + F_1 \int_D \D \psi_2 q_2 dx\\
&\leq& F_2 \int_D |q_1| |\D\psi_1| dx + F_1 \int_D |\D \psi_2| |q_2| dx\\
\end{eqnarray*}

By the Young's Inequality:
\begin{eqnarray*}
& &F_2\int_D | \D\psi_1|^2 dx + F_1 \int_D |\D\psi_2|^2 dx \\
&\leq& {1 \over 2}F_2 \int_D |q_1|^2dx + {1 \over 2}F_2 \int_D
|\D\psi_1|^2dx + {1 \over 2} F_1 \int_D |q_2|^2 dx + {1 \over
2}F_1 \int_D |\D\psi_2|^2dx.
\end{eqnarray*}
Let $C={max\{F_1,F_2\} \over min\{F_1,F_2\}}$, we get :
\begin{eqnarray*}
\int_D | \D\psi_1|^2 dx + \int_D |\D\psi_2|^2 dx \leq C( \int_D
|q_1|^2dx +  \int_D |q_2|^2 dx),
\end{eqnarray*} which is simply
\begin{eqnarray}
\|\D\psi_1\|_{L^2}^2  + \|\D\psi_2\|_{L^2}^2  \leq C(
\|q_1\|_{L^2}^2 +  \|q_2\|_{L^2}^2),
\end{eqnarray}

Since $D=(0, L) \times (0, L)$,  by the Poincare's inequality and
the inequality (6.4) in \cite{lad}, we know that $\|\psi_i
\|_{H^2}$ can be upper bounded by $\|\D\psi_i\|_{L^2}$ multiplying
by a positive constant. Thus there exists a positive constant $C$,
such that
\begin{eqnarray}
\|\psi_1\|_{H^2}^2  + \|\psi_2\|_{H^2}^2  \leq C( \|q_1\|_{L^2}^2
+  \|q_2\|_{L^2}^2).
\end{eqnarray}
\end{proof}

In the proof of well-posedness of the system, we need the
following random version of the Gronwall's inequality, which is
Lemma 3.9 in \cite{DuanMillet} with minor modification.

\begin{lemma} (\cite{DuanMillet}) \label{gronwall}
Let $X, Y$ and $I$ be non decreasing, non-negative processes,
$\varphi$ be a non-negative processes and $Z$ be a non-negative
integrable random variable. Assume that $\int_0^T\varphi(r)dr \le
M$ almost surely and there exist positive constants $\alpha$ and
$\beta \le \frac{1}{2(1+ ME^M)}$,  $\tilde C > 0$ and $\bar C >
0$(depending on $M$) such that
$$ X(t) + \alpha Y(t) \le Z + \int_0^t\varphi(r) X(r) dr + I(t),
a.s.
$$
$$\EX (I(t)) \le \beta\EX(X(t)) + C\int_0^t\EX(X(r))dr + \tilde C.
$$
Then if $X\in L^{\infty}([0, T]\times\Omega)$, we have for $t\in [0,
T]$,
$$\EX X(t) + \alpha\EX Y(t) \le \bar C(1 + \EX (Z)).$$
 \end{lemma}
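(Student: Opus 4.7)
The plan is to first establish a pathwise bound on $X(t)$ via the classical (deterministic) Gronwall inequality, and then pass to expectations, exploiting the precise smallness of $\beta$ to absorb the $\EX X(t)$ contribution that arises through $\EX I(t)$.

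First I would discard the non-negative term $\alpha Y(t)$ from the hypothesis, obtaining the scalar pathwise inequality
\begin{equation*}
X(t) \le \bigl(Z + I(t)\bigr) + \int_0^t \varphi(r) X(r)\,dr.
\end{equation*}
Since $I$ is non-decreasing, so is $r \mapsto Z+I(r)$, and the classical Gronwall lemma applied for almost every $\omega$, together with the bound $\int_0^T \varphi(r)\,dr \le M$, yields
\begin{equation*}
X(t) \le \bigl(Z + I(t)\bigr) \exp\Bigl(\int_0^t \varphi(r)\,dr\Bigr) \le e^M \bigl(Z+I(t)\bigr) \quad \text{a.s.}
\end{equation*}
Substituting this estimate back into the hypothesis and using monotonicity of $I$ a second time produces the companion pathwise bound $\alpha Y(t) \le (1+Me^M)(Z+I(t))$ almost surely.

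Next I would take expectations in the bound for $X(t)$ and use the assumed estimate on $\EX I(t)$:
\begin{equation*}
\EX X(t) \le e^M \EX Z + e^M\Bigl[\beta\, \EX X(t) + C\!\int_0^t \EX X(r)\,dr + \tilde C\Bigr].
\end{equation*}
The decisive point is that the hypothesis $\beta \le \tfrac{1}{2(1+Me^M)}$, combined with the elementary inequality $e^M \le 1+Me^M$ for $M \ge 0$ (which follows from $(1-M)e^M \le 1$), gives $e^M \beta \le \tfrac12$. The hypothesis $X \in L^\infty([0,T]\times\Omega)$ guarantees $h(t) := \EX X(t)$ is finite, so the term $e^M\beta\, \EX X(t)$ can legitimately be absorbed on the left, leaving the purely deterministic integral inequality
\begin{equation*}
h(t) \le 2 e^M\bigl(\EX Z + \tilde C\bigr) + 2 e^M C \int_0^t h(r)\,dr,
\end{equation*}
to which a standard deterministic Gronwall step applies, yielding $\EX X(t) \le \bar C_1 (1+\EX Z)$ on $[0,T]$.

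To finish, I would take expectations in the companion pathwise bound for $\alpha Y(t)$ and substitute the estimate just obtained for $\EX X(t)$; this gives $\alpha \EX Y(t) \le \bar C_2(1+\EX Z)$, and summing the two estimates produces the required inequality with $\bar C = \bar C_1 + \bar C_2$. The only genuine subtlety, and the main obstacle, is recognising that the numerical factor $2(1+Me^M)$ in the smallness assumption on $\beta$ has been calibrated precisely so that the expected version of the pathwise Gronwall estimate admits the absorption; the $L^\infty$ hypothesis on $X$ is invoked exactly to make this absorption non-circular, since without a priori finiteness of $\EX X(t)$ the rearrangement $(1-e^M\beta)\EX X(t) \le \cdots$ would be meaningless.
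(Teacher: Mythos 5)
The paper does not supply a proof of this lemma; it is quoted verbatim (with minor notational changes) as Lemma 3.9 of \cite{DuanMillet} and used as a black box, so there is no in-paper argument to compare against.

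Your proposed proof is nonetheless correct, and it follows what one would expect to be the intended argument. The pathwise Gronwall step $X(t)\le e^M(Z+I(t))$ is legitimate because the $L^\infty$ hypothesis on $X$ together with monotonicity guarantees that for a.e.\ $\omega$ the map $t\mapsto X(t,\omega)$ is bounded and Borel measurable, so the classical integral Gronwall lemma with the non-decreasing forcing $Z+I(\cdot)$ applies. The companion bound $\alpha Y(t)\le(1+Me^M)(Z+I(t))$ follows by dropping $-X(t)\le 0$ and reinserting the pathwise estimate into $\int_0^t\varphi X$. The decisive observation, which you correctly identify, is the elementary inequality $e^M\le 1+Me^M$ (equivalently $(1-M)e^M\le 1$, which holds since $\frac{d}{dM}[(1-M)e^M]=-Me^M\le 0$), so that $e^M\beta\le\tfrac12$; this is exactly why the threshold $\beta\le\frac{1}{2(1+Me^M)}$ is written the way it is. The $L^\infty$ hypothesis is used exactly and only to make the absorption $(1-e^M\beta)\EX X(t)\le\cdots$ meaningful, and you flag this correctly. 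Taking expectations, absorbing, applying deterministic Gronwall to $\EX X$, and finally feeding the result back into the $Y$ bound closes the argument. No gaps.
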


\subsection{Well-posedness of perturbed two-layer system}

The solution for the stochastic two-layer  geophysical flow problem  under random influences  is denoted as $q^\e$, although often we omit the
$\e$ here. The goal for this paper is to show the large deviation principle (or equivalently, the Laplace principle) for the
family $q^\e$.

Let $\mathcal{A}$ be the  class of ${\mathbb{H}}_0-$valued $(\cF_t)-$predictable stochastic processes $q$ with the property $\int_0^T
\|q(s)\|^2 ds < \infty, \; $ a.s. Let
\[S_M=\Big\{h \in \mathcal{A}\ \mbox{and}\ h \in L^2(0, T; H_0): \int_0^T \|h(s)\|^2 ds \leq M\Big\}.\]
The set $S_M$ endowed with the following weak topology is a
  Polish space (complete separable metric space)
\cite{BD07}:
$ d_1(h, k)=\sum_{i=1}^{\infty} \frac1{2^i} \big|\int_0^T
(h(s)-k(s), \tilde{e}_i(s))_0 ds \big|,$
where $ \{\tilde{e}_i(s)\}_{i=1}^{\infty}$ is a complete
orthonormal basis for $L^2(0, T; {\mathbb{H}}_0)$. Define
\begin{equation} \label{AM}
 \mathcal{A}_M= \{ q \in \mathcal{A}: q(\om) \in S_M, a.s.  \}.
\end{equation}
 As in \cite{Sundar}, we prove existence and uniqueness of the solution to the stochastic two-layer  geophysical flow
equation. However,  in the sequel, we will need some precise
bounds on the norm of the solution to a more general equation,
which contains  an extra forcing term driven by an element of
 $ \mathcal{A}_M$. More precisely, let $h\in \mathcal{A}$ and
 consider the following
generalized two-layer system with initial condition $q_h(0)=\xi$,
\begin{equation} \label{Benardgene}
d q_h(t) + \big[ Aq_h(t) +F(q_h(t))\big]dt =
\s(q_h(t)) dW(t) + \tilde{\s}(q_h(t)) h(t) dt. 
\end{equation}

\begin{remark}
The noise intensity $\tilde{\s}: {\mathbb{H}} \to L_Q({\mathbb
{H}}_0, {\mathbb{H}})$ is assumed to satisfy the same conditions as
$\s$(Assumption A).
\end{remark}

\begin{definition} (Weak solution)\\
Recall that a stochastic process $q_h(t,\om)$ is called the weak
solution for the generalized stochastic two-layer
quasi-geostrophic flow problem  \eqref{Benardgene}  on $[0, T]$
with initial condition $\xi$ if $q_h$  is in $C([0, T];
\mathbb{H}) \cap L^2((0, T); \mathbb{H}_0^1)$, a.s., and satisfies
\begin{eqnarray}\label{weak}
 (q_h(t), \psi)-(\xi, \psi) + \int_0^t[ (q_h(s), A\psi)
 +\big( F(q_h(s)), \psi\big) ]ds \nonumber \\
 =  \int_0^t (\s(q_h(s)) dW(s),\psi)
 + \int_0^t \big(\tilde{\s}(q_h(s)) h(s),\psi\big) ds,\;\; a.s.,
\end{eqnarray}
for all $\psi \in \mathcal{D}(A)$ and all $t \in [0,T]$.
\end{definition}
 In the following,  we work in the Banach space
$ X: = C\big([0, T]; \mathbb{H}\big) \cap L^2\big((0, T);
\mathbb{H}_0^1\big) $
with the norm
\begin{eqnarray}\label{norm}
\|q\|_X = \Big\{\sup_{0\leq s \leq T}\|q(s)\|^2+ \int_0^T \sup_{0\le
\tau \le s}\|\nabla q(\tau)\|^2 ds\Big\}^\frac12.
\end{eqnarray}

\begin{definition} (Mild solution and strong solution)\\
Given $\xi \in \mathbb{H}$, mild solution to the stochastic
evolutionary equation \eqref{Benardgene} on the stochastic space
$(\Om, \cF, \cF_t, \PX)$ is a $\cF_t$-adapted process $q_h(t)\in
C([0,T];$$\mathbb{H}$$) \bigcap L^2(0,T;$$\mathbb{H}$$^1_0)$,
satisfying the following integral form:
\begin{equation*}
q_h(t) = S(t) \xi + \int^t_0 S(t-s)[\tilde{\s}(q_h(s)) \,h(s)-
F(q_h(s))] ds + \int^t_0 S(t-s)\sigma(q_h(s))dW(s),
\end{equation*}
where $S(t)$ is the semigroup generated by the linear (unbounded)
operator $A$.  Moreover, $q_h$ is   called a strong solution if $u
\in C([0,T];\mathbb{H}$$^1_0) \bigcap L^2(0,T;\mathcal{D}(A))$ for
  $q_h(0)=\xi \in \mathbb{H}$$^1_0$.
\end{definition}

\begin{theorem}\label{wellposeness} (Well-posedness )\\
If Assumptions (A1, A.2, A.3) hold  and  the initial datum satisfies
that $\EX \|\xi\|^2 < \infty$, then the mild solution to equation
\eqref{Benardgene} is unique. Moreover, we fix $M>0$, then for any
$h\in \mathcal{A}_M$, there exists a pathwise unique weak solution
$q_h$ of the generalized stochastic two-layer quasi-geostrophic flow
problem \eqref{Benardgene} with initial condition  $q_h(0)=\xi
\in{\mathbb{H}}$  such that $q_h\in X$ a.s. There also exists a
constant $C_1=C_1(T,M,K, F_1, F_2 , \nu ,r)$ such that
\begin{equation} \label{boundgeneral}
\EX\Big( \sup_{0\leq s \leq T} \|q_h(s)\|^2 + \int_0^T \sup_{0\leq
\tau \leq s}\|\nabla q_h(\tau)\|^2\, ds \Big) \leq C_1\, \big(
1+\EX\|\xi\|^2\big).
\end{equation}
Furthermore, if the initial condition $q_h(0) = \xi \in$
$\mathbb{H}$$^1_0$ and the additional Assumptions (A'1, A'.2)
hold, then the solution is strong and unique.
\end{theorem}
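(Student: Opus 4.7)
The plan is to combine a Galerkin approximation with energy estimates based on Lemmas \ref{Jacobi} and \ref{bound}, and then close the estimate using the random Gronwall inequality (Lemma \ref{gronwall}). The extra drift $\tilde{\sigma}(q_h)h$ will be handled by the bound $\int_0^T\|h(s)\|^2ds \leq M$ that is built into $\mathcal{A}_M$.

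First I would project onto a finite-dimensional subspace $\mathbb{H}_n = \mathrm{span}\{e_1, \dots, e_n\}$ of eigenfunctions of $A$, and solve the resulting finite-dimensional SDE for $q^n_h$; existence and uniqueness there follow from the local Lipschitz and linear growth bounds in Assumption A on $\sigma$ and $\tilde{\sigma}$, combined with the fact that, via Lemma \ref{bound}, each $\psi_i^n$ is controlled in $H^2$ by the $L^2$-norm of $q^n$, so the nonlinear term $F(q^n_h)$ is locally Lipschitz on $\mathbb{H}_n$. Next I would apply Itô's formula to $\|q^n_h(t)\|^2$. The key cancellations come from $(J(\psi_i, q_i), q_i) = 0$ in Lemma \ref{Jacobi}; the $\beta y$ and $f$ terms and the Ekman term $-r\Delta\psi_2$ are controlled by $\|\psi\|_{H^2} \leq C(\|q_1\| + \|q_2\|)$ from Lemma \ref{bound}, together with Cauchy--Schwarz and Young's inequality. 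The viscous term $(Aq, q)$ produces $\nu\|\Delta\psi\|^2$, which by Lemma \ref{bound} again dominates $\|\nabla q\|^2$ up to a constant; this yields the dissipative term needed to absorb gradients. The driving term $(\sigma(q^n_h) dW, q^n_h)$ will be controlled by a Burkholder--Davis--Gundy estimate and Assumption A.2, while the perturbation $(\tilde{\sigma}(q^n_h) h, q^n_h)$ is bounded using Cauchy--Schwarz in time and Assumption A.2 by $\int_0^t\|h(s)\|^2 ds \cdot \sup_s(1+\|q^n_h(s)\|^2)$.

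After taking suprema and expectations, the resulting inequality has exactly the shape required by Lemma \ref{gronwall}, with $\varphi(r) = \|h(r)\|^2 + \mathrm{const}$ satisfying $\int_0^T \varphi(r) dr \leq M + CT$; choosing the splitting so that the coefficient $\beta$ in front of $\EX X(t)$ on the right is small enough, the random Gronwall lemma delivers the uniform-in-$n$ bound \eqref{boundgeneral} for $q^n_h$ (with the unusual $\int_0^T \sup_{\tau\leq s}\|\nabla q^n_h(\tau)\|^2 ds$ produced naturally from the stochastic-integral BDG estimate before Fubini, or equivalently by estimating $\sup_{\tau\leq s}$ first and integrating). I would then pass to the limit by standard weak/weak-$*$ compactness in $X$, using the Aubin--Lions lemma for strong convergence in $L^2([0,T]; \mathbb{H})$ (needed to handle the Jacobian nonlinearity) together with Prokhorov-type tightness and Skorokhod's representation; the limit point is a weak solution in the sense of \eqref{weak}. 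Pathwise uniqueness is obtained by applying the same energy identity to the difference $q_h - \tilde q_h$ of two solutions, using Assumption A.3 and the Jacobi estimate \eqref{jac-2a} to bound $(J(\psi - \tilde\psi, q), q - \tilde q)$ by $\|\nabla(q - \tilde q)\|\cdot\|\Delta\psi\|\cdot\|\nabla(q - \tilde q)\|$ (absorbed into the viscous term via Lemma \ref{bound}), then closing with deterministic Gronwall.

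The main obstacle is the Jacobian term: unlike the 2D Navier--Stokes case, here the advected quantity $q$ and the stream function $\psi$ are linked through a coupled elliptic system, so every cubic estimate on $J(\psi_i, q_i)$ must first be converted from $q$ to $\psi$ (or vice versa) via Lemma \ref{bound}. A careful bookkeeping of the constants $F_1, F_2, \nu, r$ is necessary to ensure the viscous dissipation strictly dominates what the nonlinearity contributes after Young's inequality; this is what produces the explicit $C_1(T,M,K,F_1,F_2,\nu,r)$ in \eqref{boundgeneral}. For the strong solution part, I would repeat the procedure but apply Itô to $\|\nabla q_h\|^2$ (or equivalently $\|\Delta\psi\|^2$), using the stronger estimate \eqref{jac-3} in Lemma \ref{Jacobi} to control $(J(\psi, q + \beta y), \Delta\psi)$ and invoking Assumption A' to keep $\sigma$ bounded in $\mathbb{H}_0^1$; the same Galerkin--Gronwall scheme then yields the strong solution with $q_h \in C([0,T]; \mathbb{H}_0^1) \cap L^2(0,T; \mathcal{D}(A))$.
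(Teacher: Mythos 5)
Your plan follows the same overall route as the paper: an It\^o energy identity for $\|q\|^2$ exploiting the Jacobi cancellations, then Burkholder--Davis--Gundy and the random Gronwall lemma (Lemma~\ref{gronwall}) to close \eqref{boundgeneral}, and a second It\^o computation on $\|\nabla q\|^2$ using \eqref{jac-3} and Assumption A' for the strong solution. The paper presents only these a priori estimates and outsources the existence step to \cite{PZ92,DuanMillet}, whereas you spell out the Galerkin + Aubin--Lions + Skorokhod machinery; since \cite{DuanMillet} uses precisely that construction, this is a more complete account of the same argument rather than a different one.

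The one place your sketch would not survive scrutiny is the dissipation step. You write that the viscous term ``produces $\nu\|\Delta\psi\|^2$, which by Lemma~\ref{bound} dominates $\|\nabla q\|^2$ up to a constant.'' Lemma~\ref{bound} runs in the opposite direction: it gives $\|\Delta\psi_i\| \le C\|q\|$, an upper bound on the $H^2$-norm of $\psi$ by the $L^2$-norm of $q$, so it cannot furnish a lower bound at the level of $\|\nabla q\|^2$. Moreover the viscous pairing does not produce $\|\Delta\psi_i\|^2$; integrating by parts with the stated boundary conditions $\psi_i = \Delta\psi_i = q_i = 0$ gives $(\Delta^2\psi_i,\,q_i)=(\Delta\psi_i,\,\Delta q_i)=-\|\nabla q_i\|^2 + F_i(\psi_i-\psi_{3-i},\,\Delta q_i)$, so the dissipation appears directly at the level of $\|\nabla q_i\|^2$, and the cross term (after one more integration by parts, Lemma~\ref{bound}, and Young's inequality) contributes at most $\tfrac12\|\nabla q_i\|^2 + C\|q\|^2$, the latter feeding into the Gronwall coefficient. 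Lemma~\ref{bound} is needed in the opposite direction to what you state --- to control the $\beta$-advection, the Ekman term $r\Delta\psi_2$, and the Jacobian by $\|q\|$ --- and that is exactly how the paper uses it when it passes from $2\nu\int_0^t(\Delta^2\psi_1,q_1)\,ds$ to $-\nu\int_0^t\|\nabla q_1\|^2\,ds$ in the first displayed chain of the proof. Your conclusion is correct; the justification as written is not, and fixing it requires tracking the elliptic coupling carefully rather than appealing to Lemma~\ref{bound} for a lower bound.
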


\begin{remark}
   Note that when $h\equiv 0$ and $\s$ is multiplied
by $\sqrt{\e}$ with any positive constant $\e$, we deduce that the stochastic two-layer  geophysical flow  equation  has a unique
weak solution. Note that here $\e$ does not have to be small. On the other hand, if the covariance operator $Q\equiv 0$, i.e.,
when the Gaussian process $W$ vanishes, we also deduce the existence and uniqueness of  the solution to the deterministic
 control equation defined in terms of an element $h\in  L^2((0,T);{\mathbb{H}})$ and an initial condition $\xi \in {\mathbb{H}}$
\begin{equation}\label{dc}
d q(t) + \big[ Aq(t) +F(q(t))\big]dt =\s(q(t)) h(t) dt, \;\;
q(0)=\xi.
\end{equation}
 If  $h \in S_M$,  the solution $q$ to \eqref{dc} satisfies
\begin{equation} \label{control-norm2}
  \sup_{0\leq s \leq T} \|q(s)\|^2+\int_0^T \sup_{0\leq \tau \leq s}\|\nabla q(\tau)\|^2 ds
  \leq C_2(F_1, F_2 , \nu ,r, K, T, M, \EX\|\xi\|).
\end{equation}
\end{remark}



\begin{proof}  We   refer to \cite{PZ92}
and \cite{DuanMillet} for the existence of the mild solution. Here
we only give a priori estimates   for the solutions of
\eqref{Benardgene} to guarantee the existence of the strong
solutions. For   simplicity, we suppress the $h-$dependence in
$q$'s and
we also omit the subscript $L^2$ in various norms below.\\
We define $\|q\|^2_{L^2}=\|q_1\|^2_{L^2} + \|q_2\|^2_{L^2}$. The Ito
formula for $\|q_1\|^2_{L^2}$ and using Assumption A  gives:
\begin{eqnarray*}
\|q_1\|^2 &=& \|q_1(0)\|^2 +2 \nu  \int_0^t
(\triangle^2 \psi_1, q_1)ds\\
& &- 2\int^t_0(J(\psi_1,q_1+\beta y),q_1)ds + \int^t_0(f,q_1)ds +
\int^t_0 (\tilde{\s}(q_1,q_2)h_1,q_1)ds\\
& &+ 2\int_0^t\|\s_1(q_1,q_2)\|^2_{L_Q(H_0; H)}ds + 2\int^t_0
({\s}_1(q_1,q_2)dW(s),q_1)ds\\
&\leq&\|q_1(0)\|^2 - \nu  \int_0^t \|\nabla q_1\|^2ds
+\int^t_0\|f\|^2ds\\
&+&C\int^t_0 (1+ \|h_1\|)(\|q_1\|^2 + \|q_2\|^2)ds + |M_1(\tau)|,
\end{eqnarray*}
where
\begin{eqnarray*}
M_1(\tau) = 2\int^\tau_0 ({\s}_1(q_1,q_2)dW(s),q_1)ds.
\end{eqnarray*}
Therefore
\begin{eqnarray} \label{q1-up}
& &  \|q_1\|^2 +  \nu  \int_0^t  \|\nabla q_1(\tau)\|^2ds  \nonumber \\
& \le & \|q_1(0)\|^2 + \int^t_0\|f\|^2ds  \nonumber \\
& +&  C\int^t_0 (1+ \|h_1\|)( \|q_1(\tau)\|^2 +\|q_2(\tau)\|^2)ds
+  |M_1(\tau)|.
\end{eqnarray}
Similarly,
\begin{eqnarray} \label{q2-up}
& &  \|q_2\|^2 +  \nu  \int_0^t  \|\nabla q_2(\tau)\|^2ds  \nonumber \\
& \le & \|q_2(0)\|^2   \nonumber \\
& + &  C\int^t_0 (1+ \|h_2\|)( \|q_1(\tau)\|^2 +\|q_2(\tau)\|^2)ds
+  |M_2(\tau)|.
\end{eqnarray}
where
\begin{eqnarray*}
M_2(\tau) = 2\int^\tau_0 ({\s}_2(q_1,q_2)dW(s),q_2)ds.
\end{eqnarray*}
Adding \eqref{q1-up} and \eqref{q2-up} and taking $\sup$,
\begin{eqnarray} \label{q-sup}
& & \sup_{0\leq \tau\leq t} \|q(\tau) \|^2 +  \nu  \int_0^t \sup_{0\leq \tau\leq s} \|\nabla q (\tau)\|^2ds  \nonumber \\
& \le & \|q(0)\|^2 + \int^t_0\|f\|^2ds  \nonumber \\
& + &  C\int^t_0 (1+ \|h\|) \; \sup_{0\leq \tau\leq s} \|q
(\tau)\|^2  ds + \sup_{0\leq \tau\leq t} \; |\hat{M} (\tau)|,
\end{eqnarray}
where $\hat{M}=M_1+M_2$. By the Burkholder-Davis-Gaudy inequality,
\begin{eqnarray}
\EX\sup_{0\leq \tau\leq t}|\hat{M} (\tau)| &\leq& \EX(\int_0^t \|q
\|^2
\; \|\sigma (q)\|^2_{L_Q(\mathbb{H}_0;\mathbb{H})}ds)^{{1 \over 2}}  \nonumber\\
&\leq& \mu \EX\sup_{0\leq \tau\leq t} \|q \|^2 + C \int_0^t (1 +
\EX\sup_{0\leq \tau\leq s}\|q (\tau)\|^2 ) ds.
\end{eqnarray}

Noticing that $\|h\|^2_{L^2(0,T; {\mathbb{H}})} \leq M$, taking
$\mu$ depending on $M$ small enough such that the condition of Lemma
3.3 is satisfied and using Lemma 3.3, there exists a constant
$C_1=C_1(T, M, K, F_1, F_2 , \nu ,r)$ such that
\begin{equation} \label{E-sup}
\EX\Big(\sup_{0\leq \tau\leq t} \|q(\tau)\|^2 + \nu \int_0^t
\sup_{0\leq \tau\leq s}\|\nabla q(\tau)\|^2\, ds \Big) \leq C_1\,
\big( 1+\EX\|\xi\|^2\big), \mbox{ for all}\; 0\le t \le T.
\end{equation}

Next, we derive the estimates of $\EX\sup_{0\leq \tau\leq t}\|\nabla
q_1(\tau)\|^2$ and $\EX\sup_{0\leq \tau\leq t}\|\nabla
q_1(\tau)\|^2$. Ito formula for $\|\nabla q_2\|^2$   gives:

\begin{eqnarray*}
\|\nabla q_1(t)\|^2 &=& \|\nabla q_1(0)\|^2 + 2\nu \int_0^t(\triangle^2\psi_1,\triangle q_1)ds +
2\int_0^t(f,\triangle q_1)ds\\
&-& 2\int_0^t(( J(\psi_1,q_1+\beta y),\triangle q_1)ds  + 2 \int_0^t(\tilde{\sigma_1}(q_1,q_2)h_1,\triangle q_1)ds\\
&+& 2\int_0^t \|\nabla \sigma_1(q_1,q_2)\|^2_{L_Q(H_0; H)}ds +
2\int_0^t(\sigma_1(q_1,q_2)dW(s),\triangle q_1)ds.
\end{eqnarray*}

Since
\begin{eqnarray*}
J(\psi_1,q_1+\beta y) = J(\psi_1,q_1) + \beta {\partial \psi_1 \over
\partial x},
\end{eqnarray*}
the Cauchy-Schwartz inequality, the Young's inequality and $\|\nabla
\psi_1\|_{L^{\infty}}\le C\|\nabla \triangle \psi_1\|$ imply:
\begin{eqnarray*}
2|(J(\psi_1,q_1+\beta y),\triangle q_1)| &\leq& 2
|(J(\psi_1,q_1),\triangle q_1)| + 2\beta | ({\partial \psi_1
\over \partial x},\triangle q_1)|\\
&\leq& C_1\|\nabla \triangle \psi_1\|\;\|\nabla q_1\|\;\|\D q_1\| +
2\beta|({\partial \psi_1 \over
\partial x},\triangle q_1)|\\
&\leq& \frac{\nu}{4} \| \D q_1\|^2 + C_2\|\nabla \triangle
\psi_1\|^2\|\nabla q_1\|^2+ C_3\|{\partial \psi_1 \over \partial x}\|^2\\
&=&  C_2\|\nabla (q_1 + F_1(\nabla \psi_1 - \nabla \psi_2 )) \|^2\;\|\nabla q_1\|^2\\
& &+  C_3\|{\partial \psi_1 \over \partial x}\|^2 + \frac{\nu}{4} \|\triangle q_1\|^2\\
&\leq&  C_4(\|\nabla q_1\|^4 +  \|\nabla q_2 \|^4) + \frac{\nu}{4} \|\triangle q_1\|^2.\\
\end{eqnarray*}
Moreover, using Assumption we have:
\begin{eqnarray*}
2\|\nabla \sigma_1(q_1,q_2)\|^2_{L_Q(H_0;
H)}&=&2\|\sigma_1(q_1,q_2)\|^2_{L_Q(H_0; V)}\\
&\leq& 2K'(1 + \|\nabla q_1\|^2 + \|\nabla q_2\|^2),
\end{eqnarray*}

\begin{eqnarray*}
(f,\triangle q_1) \leq {\nu \over 4}\|\triangle q_1\|^2 + {1 \over
\nu} \|f\|^2,
\end{eqnarray*}
\begin{eqnarray*}
|(\tilde{\sigma_1}(q_1,q_2)h_1,\triangle q_1)| &\leq& {\nu \over
4}\|\triangle q_1 \|^2 + {1 \over \nu}
\|\tilde{\s}(q_1,q_2)h_1\|^2 \\
&\leq& {\nu \over 4}\|\triangle q_1 \|^2 + C(1 + \|q_1\|^2 +
\|q_2\|^2)\|h_1\|^2.
\end{eqnarray*}

Using the above estimates, Poincare inequality and Lemma
\ref{bound}, we get:
$$
\| \nabla q_1(t)\|^2 + \nu \int_0^t \|\triangle q_1(s)\|^2 ds $$
$$\leq C + \|\nabla q_1(0)\|^2 + c\int_0^t \|\nabla q\|^4ds $$
$$ +
C\int_0^t\|\nabla q(\tau)\|^2\|h_1\|^2ds + 2\int^t_0(\triangle
q_1,\sigma_1(q_1,q_2))dW(s))$$ $$\leq C + \|\nabla q_1(0)\|^2 +
c\int_0^t \|\nabla q\|^4ds$$
\begin{eqnarray}
+ C\int_0^t\|\nabla q(\tau)\|^2\|h_1\|^2ds + 2 \sup_{0\leq \tau\leq
t}|M_3(\tau)|,\label{grad1}
\end{eqnarray}
where
\begin{eqnarray*}
M_3(\tau) = \int^{\tau}_0 (\triangle q_1,\sigma_1(q_1,q_2)dW(s)).
\end{eqnarray*}

Similarly, we have:
$$
\| \nabla q_2(\tau)\|^2 + 2\nu \|\triangle q_2(\tau)\|^2 ds
$$
$$\leq C + \|\nabla q_2(0)\|^2 +
c\int_0^t \|\nabla q\|^4ds$$
\begin{eqnarray}
+ C\int_0^t\|\nabla q(\tau)\|^2\|h_1\|^2ds + 2 \sup_{0\leq \tau\leq
t}|M_4(\tau)|,\label{grad2}
\end{eqnarray}
where
\begin{eqnarray*}
M_4(\tau) = \int^{\tau}_0 (\triangle q_2,\sigma_1(q_1,q_2)dW(s)).
\end{eqnarray*}

Adding \eqref{grad1} and \eqref{grad2}, we have
$$
\| \nabla q(\tau)\|^2 + 2\nu \|\triangle q(\tau)\|^2 ds \leq C +
\|\nabla \xi\|^2 $$
\begin{eqnarray}+ c\int_0^t (\|h_1\|^2 + \|\nabla q\|^2)\|\nabla q(\tau)\|^2ds + 2 \sup_{0\leq \tau\leq
t}|\tilde{M}(\tau)|,\label{grad2}
\end{eqnarray}
where
\begin{eqnarray*}
\tilde{M}(\tau) = M_3(\tau) + M_4(\tau).
\end{eqnarray*}By the
Burkholder-Davis-Gaudy inequality and \eqref{E-sup}:
\begin{eqnarray*}
\EX\sup_{0\leq \tau\leq t}|\tilde{M}(\tau)| &\leq&
 K'\; \EX( \int_0^t \|\nabla q\|^2(1 + \|\nabla q\|^2)  ds )^{{1 \over 2}}\\
&\leq& \mu_1 \EX\sup_{0\leq \tau\leq t}\|\nabla q_1\|^2 + C \int_0^t
(1 + \EX\sup_{0\leq \tau\leq s}\|\nabla q\|^2) ds\\&\leq&\mu_1
\EX\sup_{0\leq \tau\leq t}\|\nabla q_1\|^2 + C.
\end{eqnarray*}
Taking $\mu$(depending on $M$ and the bound in \eqref{E-sup}) small
enough such that the condition of Lemma 3.3 is satisfied and using
Lemma 3.3, there exists a constant $C_2=C_2(T, M, K, K', F_1, F_2 ,
\nu ,r)$ such that

\begin{equation} \EX\sup_{0\leq \tau\leq t}\| \nabla q(\tau)\|^2 + 2\nu  \int_0^t
\EX\sup_{0\leq \tau\leq s}\|\triangle q(\tau)\|^2 ds \leq
C_2(\EX\|\nabla \xi\|^2 + 1).
\end{equation}
Thus the mild solution, is also strong solution, when $q(0) = \xi
\in$ $\mathbb{H}$$^1_0$.

\medskip

The proof of uniqueness is standard, we omit the proof here.

\end{proof}


\section{Large deviations}  \label{s4}

\subsection{Definiton}

  We consider large deviations via a    weak convergence approach
  \cite{BD00, BD07}, based on variational representations of
  infinite dimensional   Wiener processes.
  In this approach, the large deviations for SPDEs  are derived by
  showing some qualitative properties (well-posedness, compactness
  and weak convergence) of certain perturbations of the original
  SPDEs    \cite{Sundar, WangDuan, DuanMillet}.
   More information about this weak convergence approach for large deviations
   in the finite dimensional setting can be found in the book \cite{Dupuis}.
  It is different from other existing approaches,
  which usually require extra exponential tightness estimates,
  for establishing large deviation principles for
  SPDEs
  \cite{CardonWeber, Cerrai-Rockner, Chang, ChenalMillet,  Chow, FW92, Kallian-Xiong, Peszat, Sowers, Zab}.
An alternative approach \cite{Feng} for large deviations is based
on nonlinear semi-group theory and infinite dimensional
Hamilton-Jacobi equations; it also requires to establish
exponential tightness.
\bigskip

We rewrite the stochastic two-layer model to indicate its
dependence  on the small parameter $\e$:
\begin{eqnarray} \label{Benard2}
\;\;\;\;   d q^\e(t) + [Aq^\e(t) +F(q^\e(t))]dt = \sqrt{\e} \;
\s(q^\e(t)) dW(t),\;     q^\e(0)=\xi.
\end{eqnarray}
The solution is denoted as $q^\e = {\mathcal G}^\e(\sqrt{\e} W)$ for a Borel measurable function ${\mathcal G}^\e: C([0, T];
{\mathbb{H}} ) \to X.$
 We show a large deviation principle for
$q^\e$.

  The  space $X=C([0, T]; {\mathbb{H}}) \cap L^2((0, T); \mathbb{H}_0^1)$
  endowed with the metric associated with the norm defined in
\eqref{norm} is Polish. Let   $\mathcal{B}(X)$ denote its  Borel
$\s-$field.
 The theory of large deviations
\cite{BD07, FW84} is about   the exponential decay of $\PX(q^\e
\in A)$ for events $A \in {\mathcal B}(X)$ as $\e \to 0$; this
decay  is described in terms of a rate function. We recall some
definitions \cite{BD07}.
\begin{definition} (Good Rate function)\\
A function $I: X \to [0, \infty]$ is called a good rate function
on $X$ if for each $M<\infty$ the level set $\{q \in X: I(q) \leq
M \}$ is a compact subset of $X$. For $A\in \mathcal{B}(X)$, we
define $I(A)=\inf_{q \in A} I(q)$.
\end{definition}
\begin{definition} (Large deviation principle)\\
Let $I $ be   a rate function on $X$.   The random sequence
$\{q^\e \}$ is said to satisfy a large deviation principle on $X$
 with rate function $I$ if the following two conditions hold.

1. \textbf{Large deviation upper bound.} For each closed subset
$F$ of $X$:
$$
\lim\sup_{\e\to 0}\; \e \log \PX(q^\e \in F) \leq -I(F).
$$

2. \textbf{Large deviation lower bound.} For each open subset $G$
of $X$:
$$
\lim\inf_{\e\to 0}\; \e \log \PX(q^\e \in G) \geq -I(G).
$$
\end{definition}
\smallskip

The hypothesis on the growth condition and the Lipschitz property of
$\sigma$ are still the same as  {\bf (A.1) (A.2) (A.3)}.\\

The proof of the large deviation principle will use the following
technical lemma which studies time increments of the solution to
the stochastic control equation.  For any integer $k=0, \cdots,
2^n-1$, and $s\in [k T 2^{-n}, (k+1) T 2^{-n}]$, set
$\underline{s}_n= kT2^{-n}$ and $\bar{s}_n=(k+1)T 2^n$. Given
$N>0$,
 $h\in {\mathcal A}_M$,
$\e\geq 0$ small enough, let  $q_h^\e$ denote the solution to
\eqref{Benardgene} given by Theorem \ref{wellposeness}, and for
$t\in [0,T]$,  let
\[ G_N(t)=\Big\{ \omega \, :\, \Big (\|q_h(t)\|^2 \leq N \Big)\
and\ \Big(  \int_0^T \|\nabla q_h(t)\|^2\, dt  \leq N \Big)
\Big\}.\]

\begin{lemma} \label{timeincrement}
Let $M,N>0$, $\sigma$ and $\tilde{\sigma}$ satisfy the Assumptions
(A.1),(A.2) and (A.3),
 $\xi\in {\mathbb{H}}$.
 Then there exists a positive  constant
$C:=C(\nu, \kappa,K,L, T,M,N,\e_0)$ such that for any  $h\in
{\mathcal A}_M$, $\e\in [0, \e_0]$,
\begin{equation}  \label{time}
I_n(h,\e):=\EX\Big[ 1_{G_N(T)}\; \int_0^T
\|q_h^\e(s)-q_h^\e(\bar{s}_n)\|^2 \, ds\Big] \leq C\,
2^{-\frac{n}{2}}.
\end{equation}
\end{lemma}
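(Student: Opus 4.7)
The plan is to decompose the time increment via the integral form of \eqref{Benardgene},
$$
q_h^\e(\bar s_n)-q_h^\e(s)=-\int_s^{\bar s_n}\![Aq_h^\e(r)+F(q_h^\e(r))]\,dr+\sqrt\e\int_s^{\bar s_n}\!\sigma(q_h^\e(r))\,dW(r)+\int_s^{\bar s_n}\!\tilde\sigma(q_h^\e(r))h(r)\,dr,
$$
and to bound the contribution of each of the four pieces to $I_n(h,\e)$ separately, using the a.s. bounds $\|q_h^\e(t)\|^2\le N$ and $\int_0^T\|\nabla q_h^\e\|^2\,dt\le N$ supplied by $1_{G_N(T)}$, the linear growth Assumption~A.2 on $\sigma$ and $\tilde\sigma$, and $\int_0^T\|h\|_0^2\,dr\le M$.

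The three lower-order pieces are routine. The stochastic piece yields $\EX\|\sqrt\e\int_s^{\bar s_n}\sigma\,dW\|^2\le\e K(1+N)(\bar s_n-s)$ by the It\^o isometry; the $h$-driven piece is bounded by $(1+N)\int_s^{\bar s_n}\|h\|_0^2\,dr$ via Cauchy--Schwarz in $r$; and the $F$-drift is estimated using the Jacobi bounds of Lemma~\ref{Jacobi} together with Lemma~\ref{bound}, giving something controlled by $(\bar s_n-s)\int_s^{\bar s_n}(1+\|\nabla q_h^\e\|^2)\,dr$ on $G_N(T)$. After multiplying by $1_{G_N(T)}$, taking expectation, integrating $s\in[0,T]$ and invoking Fubini, each of these three contributions is of order $2^{-n}$, better than what is needed.

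The delicate piece, and the source of the exponent $1/2$, is the biharmonic term $\int_s^{\bar s_n}Aq_h^\e(r)\,dr$: since $Aq=-\nu\D^2\psi$ carries four derivatives of the stream function, it does not lie in $\mathbb H$ under the weak-solution regularity $q_h^\e\in C([0,T];\mathbb H)\cap L^2(0,T;\mathbb H_0^1)$ from Theorem~\ref{wellposeness}. I would bypass this by the Gelfand-triple interpolation $\|u\|_{\mathbb H}^2\le\|u\|_V\|u\|_{V^*}$ with $V=\mathbb H_0^1$, applied to $u=q_h^\e(s)-q_h^\e(\bar s_n)$, followed by Cauchy--Schwarz in $s$:
$$
\int_0^T\!\|q_h^\e(s)-q_h^\e(\bar s_n)\|^2\,ds\le\left(\int_0^T\!\|\nabla(q_h^\e(s)-q_h^\e(\bar s_n))\|^2\,ds\right)^{1/2}\!\left(\int_0^T\!\|q_h^\e(s)-q_h^\e(\bar s_n)\|_{V^*}^2\,ds\right)^{1/2}\!.
$$
On $G_N(T)$ the first factor is essentially bounded by $CN^{1/2}$. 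The second factor is estimated by re-running the four-term decomposition in the $V^*$ norm, where the key point is $\|Aq\|_{V^*}\le C\|\nabla q\|$: this follows from the integration-by-parts identity $(Aq,\varphi)=\nu(\nabla\D\psi,\nabla\varphi)$ for $\varphi\in V$ (valid because the Remark after \eqref{pv1} gives $\D\psi_i=0$ on $\partial D$, killing the boundary term), combined with the elliptic regularity of Lemma~\ref{bound}. The analogous $V^*$ bounds for $F$ and $\sigma$ are weaker than their $\mathbb H$ versions and cause no new difficulty, so the second factor is of order $2^{-n}$; taking the square root yields the required $2^{-n/2}$.

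Assembling the four contributions using Burkholder--Davis--Gundy for the stochastic piece and Fubini for the deterministic ones produces \eqref{time} with a constant depending only on $\nu,K,L,T,M,N,\e_0$. The real obstacle throughout is the biharmonic term, and the whole argument hinges on the $V^*$-bound $\|Aq\|_{V^*}\le C\|\nabla q\|$ made possible by Lemma~\ref{bound} and the boundary behavior of $\D\psi_i$.
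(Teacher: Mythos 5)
The paper does not actually give a proof of this lemma; it simply states that the argument is ``similar'' to the corresponding one in Duan--Millet, so there is no line-by-line comparison to make. On the substance, you correctly identify the biharmonic term as the delicate part, and your $V^*$-bound $\|Aq\|_{V^*}\lesssim\|\nabla q\|$ is right (since $\D^2\psi=\D q+F\,\D(\psi_1-\psi_2)$, $q\in\mathbb{H}_0^1$, and $\|\D\psi_i\|\le C\|q\|$ by Lemma~\ref{bound}). The interpolation step, however, has a genuine gap. You assert that on $G_N(T)$ the first factor $\bigl(\int_0^T\|\nabla(q_h^\e(s)-q_h^\e(\bar{s}_n))\|^2\,ds\bigr)^{1/2}$ is $O(N^{1/2})$, but $G_N(T)$ only controls $\int_0^T\|\nabla q_h^\e(s)\|^2\,ds\le N$; the other half, $\int_0^T\|\nabla q_h^\e(\bar{s}_n)\|^2\,ds=T2^{-n}\sum_k\|\nabla q_h^\e(t_k)\|^2$, samples $\|\nabla q_h^\e\|^2$ at the dyadic grid points. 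For a weak solution $q_h^\e\in C([0,T];\mathbb{H})\cap L^2((0,T);\mathbb{H}_0^1)$ with $\xi\in\mathbb{H}$, the quantity $\nabla q_h^\e(t_k)$ need not even be finite (a.s.\ or in expectation) at the specific times $t_k$, and the a priori estimate \eqref{boundgeneral} only bounds $\EX\int_0^T\|\nabla q_h^\e\|^2\,dt$, not these pointwise values. So the first factor is not under control, and the argument as written does not close.

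The remedy, consistent with the Duan--Millet template the authors cite, is to avoid ever evaluating $\nabla q_h^\e$ at the grid times. Expand $\|q_h^\e(s)-q_h^\e(\bar{s}_n)\|^2=\|q_h^\e(\bar{s}_n)\|^2-\|q_h^\e(s)\|^2-2\langle q_h^\e(\bar{s}_n)-q_h^\e(s),\,q_h^\e(s)\rangle$. The first two terms are handled by integrating the energy (It\^o) identity for $\|q_h^\e\|^2$ from $s$ to $\bar{s}_n$; after multiplying by $1_{G_N(T)}$, taking expectation, integrating in $s$ and using Fubini, the martingale increment yields the leading $O(2^{-n/2})$ rate. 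The cross term is a $V^*$--$V$ pairing against $q_h^\e(s)$, which \emph{is} in $\mathbb{H}_0^1$ for a.e.\ $s$, so only $\int_0^T\|\nabla q_h^\e(s)\|^2\,ds\le N$ is ever needed; the $V^*$-estimates you wrote for $Aq$, $F(q)$, $\tilde\sigma(q)h$ and the stochastic integral then give $O(2^{-n})$ or better for that piece. This fixes the gap while retaining the core of your reasoning. One small additional correction: in $(Aq,\varphi)=\nu(\nabla\D\psi,\nabla\varphi)$, the boundary term vanishes because $\varphi\in\mathbb{H}_0^1$ vanishes on $\partial D$, not because $\D\psi_i=0$ there.
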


This lemma may be similarly proved as in \cite{DuanMillet}.

We now prove weak convergence and compactness in the following two
subsections.


\subsection{Weak convergence}

Let $\e_0$ be defined as in Theorem \ref{wellposeness} and  $h_\e$
be a family of random elements taking values in ${\mathcal A}_M$.
Let $q_{h_\e}$, or more strictly speaking, $q^\e_{h_\e}$, be the
solution of the corresponding stochastic control equation  with
initial condition $q_{h_\e}(0)=\xi \in {\mathbb{H}}$:
\begin{eqnarray} \label{scontrol}
d q_{h_\e} + [Aq_{h_\e} +F(q_{h_\e})]dt
=\s(q_{h_\e}) h_\e dt+\sqrt{\e} \; \s(q_{h_\e}) dW(t) . 
\end{eqnarray}
In component form:
\begin{eqnarray*}
\frac{\partial q_{h_\e,1}}{\partial t} &+& J(\psi_{h_\e,1}, q_{h_\e,1} + \beta  y ) = \nu \D^2  \psi_{h_\e,1}\\
 &+& f   + \sigma_1(q_{h_\e,1},q_{h_\e,2})h_{\e,1} +  \sqrt{\epsilon}\sigma_1(q_{h_\e,1},q_{h_\e,2})\dot{W^1}  \; ,\\
\frac{\partial q_{h_\e,2}}{\partial t} &+& J(\psi_{h_\e,2}, q_{h_\e,2} + \beta  y )   = \nu \D^2  \psi_{h_\e,2}\\
   &-& r \D  \psi_2 + \sigma_2(q_{h_\e,1},q_{h_\e,2})h_{\e,2} + \sqrt{\epsilon}\sigma_2(q_{h_\e,1},q_{h_\e,2})\dot{W^2} \; .
\end{eqnarray*}

Note that $q_{h_\e}={\mathcal G}^\e(\sqrt{\e} W_. + \int_0^.
h_\e(s)ds )$
due to the  uniqueness of the solution. \\


For all $h \in L^2(0, T; {\mathbb{H}}_0)$, let $q_h$ be the
solution of the corresponding control equation (\ref{dcontrol})
with initial condition $q_h(0)=\xi$:
\begin{eqnarray} \label{dcontrol2}
d q_h + [Aq_h +F(q_h)]dt =\s(q_h) h dt . 
\end{eqnarray}
In component form:
\begin{eqnarray*}
\frac{\partial q_{h,1}}{\partial t} + J(\psi_{h,1}, q_{h,1} + \beta  y ) &=& \nu \D^2  \psi_{h,1}  \\
& &+ f   +  \sigma_1(q_{h,1},q_{h,2})h_1  \; ,\\
\frac{\partial q_{h,2}}{\partial t} + J(\psi_{h,2}, q_{h,2} + \beta  y ) &=& \nu \D^2  \psi_{h,2}  \\
 & &- r \D  \psi_{h,2} +  \sigma_2(q_{h,1},q_{h,2})h_2 \; .
\end{eqnarray*}

  Noting that
$\int_0^. h(s)ds \in C([0, T]; {\mathbb{H}}_0)$, we define ${\mathcal G}^0: C([0, T]; {\mathbb{H}}_0) \to C([0, T];
{\mathbb{H}}) \cap L^2((0, T); {\mathbb{H}}_0^1))$ by
$$
{\mathcal G}^0(g)=q_h   \quad \mbox{if} \quad g=\int_0^.
h(s)ds\quad \mbox{for some}\quad  h \in L^2(0, T; {\mathbb{H}}_0).
$$
If $g$ can not be represented as above, we define ${\mathcal
G}^0(g)=0$.

\begin{lemma} (Weak convergence) \label{weakconv}\\
 Suppose that $\sigma$ satisfies the Assumptions (A.1), (A.2) and (A.3).
 Let $\xi $ be ${\mathcal F}_0$-measurable such that $\EX \|\xi\|_H^2<+\infty$,
 and let $h_\e$ converge to $h$ in distribution as random elements
taking values in ${\mathcal A}_M$ (Note that here ${\mathcal A}_M$
 is endowed with the weak topology
 induced by
 the norm \eqref{norm}).
Then
 as $\e \to 0$, $q_{h_\e}$ converges in
distribution to $q_h$ in $X=C([0, T]; {\mathbb{H}}) \cap L^2((0, T); {\mathbb{H}}_0^1))$ endowed with the norm (\ref{norm}).
 That is,
  ${\mathcal G}^\e(\sqrt{\e} W_. + \int_0^. h_\e(s)ds)$  converges in
distribution to $ {\mathcal G}^0(\int_0^. h(s)ds)$ in $X$, as $\e
\to 0$.
\end{lemma}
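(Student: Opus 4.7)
The plan is to follow the Budhiraja--Dupuis weak convergence scheme as implemented in \cite{Sundar, DuanMillet}. Since $\mathcal{A}_M$ with the weak topology is Polish, by the Skorokhod representation theorem I may pass to a new probability space on which $h_\e \to h$ almost surely in $S_M$, so it suffices to prove that $q_{h_\e} - q_h \to 0$ in $X$ in probability. Set $U^\e := q_{h_\e} - q_h$. Subtracting \eqref{dcontrol2} from \eqref{scontrol} gives
\begin{equation*}
dU^\e + [A U^\e + F(q_{h_\e}) - F(q_h)]\,dt = [\sigma(q_{h_\e})h_\e - \sigma(q_h)h]\,dt + \sqrt{\e}\,\sigma(q_{h_\e})\,dW,
\end{equation*}
with $U^\e(0)=0$.

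Next I would apply the It\^o formula to $\|U^\e(t)\|_{\mathbb{H}}^2$. The linear part $(AU^\e,U^\e)$ produces a dissipation term of the form $2\nu\|\nabla\psi^{U^\e}\|^2$, which via Lemma \ref{bound} and Poincar\'e controls $\|\nabla U^\e\|^2$ up to constants depending on $F_1,F_2,\nu,r$. For the nonlinear increment $F(q_{h_\e})-F(q_h)$, I would split the Jacobian differences $J(\psi_{h_\e,i},q_{h_\e,i})-J(\psi_{h,i},q_{h,i})$ into $J(\psi^{U^\e}_i,q_{h_\e,i})+J(\psi_{h,i},U^\e_i)$ and apply the estimates \eqref{jac-3}--\eqref{jac-2a} of Lemma \ref{Jacobi}, together with Young's inequality, to absorb a fraction of $\|\nabla U^\e\|^2$ into the dissipation and leave a term of the type $\varphi(s)\|U^\e(s)\|^2$ with $\varphi \in L^1(0,T)$ almost surely. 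The drift difference I would decompose as
\begin{equation*}
\sigma(q_{h_\e})h_\e - \sigma(q_h)h = [\sigma(q_{h_\e})-\sigma(q_h)]\,h_\e + \sigma(q_h)[h_\e - h],
\end{equation*}
bounding the first summand by Assumption (A.3) as $C(1+\|h_\e\|)\|U^\e\|^2$, and treating the second as the essential convergence piece. The stochastic integral contributes an $O(\sqrt{\e})$ term after Burkholder--Davis--Gundy and the a priori bound \eqref{boundgeneral}.

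The main obstacle is showing that the cross term
\begin{equation*}
R_\e(t) := \int_0^t \bigl(\sigma(q_h(s))[h_\e(s)-h(s)],\,U^\e(s)\bigr)\,ds
\end{equation*}
tends to zero, since $U^\e$ itself depends on $\e$ and the convergence of $h_\e$ to $h$ is only weak. The standard remedy, and the reason Lemma \ref{timeincrement} was stated, is a dyadic time discretization: write $U^\e(s) = U^\e(\bar s_n) + [U^\e(s)-U^\e(\bar s_n)]$ and $q_h(s) = q_h(\bar s_n) + [q_h(s)-q_h(\bar s_n)]$, so that the leading piece becomes
\begin{equation*}
\sum_{k=0}^{2^n-1} \Bigl(\sigma(q_h(\underline{s}_n))\!\!\int_{\underline{s}_n}^{\bar s_n}\!\!(h_\e-h)(s)\,ds,\; U^\e(\bar s_n)\Bigr),
\end{equation*}
which vanishes as $\e\to 0$ for each fixed $n$ thanks to the a.s.\ weak convergence of $h_\e$ to $h$ in $L^2(0,T;\mathbb{H}_0)$, while the discretization error is controlled uniformly in $\e$ on the event $G_N(T)$ by Lemma \ref{timeincrement} (and made arbitrarily small by choosing $N$ large and then $n$ large, using \eqref{boundgeneral} to bound the complement of $G_N(T)$).

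Putting the pieces together inside the It\^o identity yields
\begin{equation*}
\|U^\e(t)\|^2 + \alpha\!\int_0^t\!\|\nabla U^\e\|^2 ds \le \int_0^t\! \varphi(s)\|U^\e(s)\|^2\,ds + I_\e(t),
\end{equation*}
with $\EX I_\e(T) \to 0$ along the indicator of $G_N(T)$. Applying the random Gronwall lemma (Lemma \ref{gronwall}) with $Z=0$ and the forcing $\varphi$ controlled by $\int_0^T(1+\|h_\e\|+\|\nabla q_h\|^2)\,ds \le M'$ on $G_N(T)$ gives $\EX[1_{G_N(T)}\|U^\e\|_X^2] \to 0$. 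Taking $N$ large kills the remaining mass, so $\|U^\e\|_X \to 0$ in probability, and hence $q_{h_\e} \to q_h$ in distribution in $X$, which is the desired identification ${\mathcal G}^\e(\sqrt{\e} W_\cdot + \int_0^\cdot h_\e\,ds) \Rightarrow {\mathcal G}^0(\int_0^\cdot h\,ds)$.
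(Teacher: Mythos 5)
Your proposal follows essentially the same route as the paper: Skorokhod representation to pass to almost-sure weak convergence of $h_\e$ in $S_M$, It\^o's formula applied to $\|q_{h_\e}-q_h\|^2$ together with the Jacobian splitting and Lemmas \ref{Jacobi}, \ref{bound}, the decomposition $\sigma(q_{h_\e})h_\e - \sigma(q_h)h = [\sigma(q_{h_\e})-\sigma(q_h)]h_\e + \sigma(q_h)(h_\e-h)$ with (A.3), restriction to the events $G_{N,\e}(T)$ controlled by the Markov inequality and the a priori bound \eqref{boundgeneral}, and dyadic time discretization via Lemma \ref{timeincrement} to kill the essential cross term $\int_0^t(\sigma(q_h)(h_\e-h),\tilde q^\e)ds$. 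The only small imprecision is your invocation of the random Gronwall lemma (Lemma \ref{gronwall}) at the end: its hypothesis $\int_0^T\varphi\,dr\le M$ must hold almost surely, not just on $G_{N,\e}(T)$, so the cleaner step (as in the paper's Claim~2, deferred to \cite{DuanMillet}, and as in the compactness Lemma \ref{compact}) is to apply the ordinary pathwise Gronwall inequality after multiplying by $1_{G_{N,\e}(T)}$, where all coefficients are uniformly bounded; the conclusion is unchanged.
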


\begin{proof}
Since ${\mathcal A}_M$ is a Polish space (complete separable
metric space), by the Skorokhod representation theorem, we can
construct processes $(\tilde{h}_\e, \tilde{h}, \tilde{W})$ such
that the joint distribution of $(\tilde{h}_\e,  \tilde{W})$ is the
same as that of $(h_\e, W)$,  the distribution of $\tilde{h}$
coincides with that of $h$, and $ \tilde{h}_\e  \to \tilde{h}$,
a.s., in the (weak) topology of $S_M$.

Let $\tilde{q}^{\e}=q_{h_\e}-q_h$, or in component form
$\tilde{q}^{\e}=(\tilde{q}_1^{\e}, \tilde{q}_2^{\e}) =
(q_1^{h_\e}-q_1^h, q_2^{h_\e}-q_2^h)$. We first derive
\begin{eqnarray}\label{difference2}
& &d \tilde{q}^{\e} + \big[A\tilde{q}^{\e} +F(q_{h_\e})-F(q_h)
\big]dt
\nonumber\\
&&\qquad  =\big[\s(q_{h_\e}) h_\e -\s(q_h) h\big] dt +\sqrt{\e} \;
\s(q_{h_\e}) dW(t), \;\; \tilde{q}^{\e}(0)=0.
\end{eqnarray}

In component form,  $\tilde{q}_1^{\e}(0) = 0,  \;
\tilde{q}_2^{\e}(0)=0$ and
\begin{eqnarray*}
\frac{\partial \tilde{q_1}^{\e}}{\partial t} + J(\psi^{h_\e}_1, q^{h_\e}_1 + \beta  y )&-&J(\psi^{h}_1, q^{h}_1 + \beta  y ) = \nu \D^2  \tilde{\psi_1}^{\e}\\
&+& \big[ (\s_1(q^{h_\e})h^\e_1 - \s_1(q^{h})h_1)\big]dt + \sqrt{\epsilon}\sigma_1(q^{h_\e})\dot{W^1} \  \; ,\\
\frac{\partial \tilde{q_2}^{\e}}{\partial t} + J(\psi^{h_\e}_2, q^{h_\e}_2 + \beta  y )&-&J(\psi^{h}_2, q^{h}_2 + \beta  y )+r\D \tilde{\psi_2}^{\e} = \nu \D^2  \tilde{\psi_2}^{\e}\\
   &+& \big[ (\s_2(q^{h_\e})h^\e_2 - \s_2(q^{h})h_2)\big]dt + \sqrt{\epsilon}\sigma_2(q^{h_\e})\dot{W^1} \  \; .\\
\end{eqnarray*}

Similar to the definition of $\|q\|_{L_2}$ in the proof of theorem \ref{wellposeness}, we define that $\|\tilde{q}^{\e}\|^2=
\|\tilde{q}_1^{\e}\|^2 + \|\tilde{q}_2^{\e}\|^2$. On any finite time interval $[0, t]$ with $t\leq T$, the It\^o's formula,
Lemmas \ref{Jacobi} and \ref{bound}, Assumption (A.2) and (A.3) yield
\begin{eqnarray}
& &\|\tilde{q}_1^{\e}(t)\|^2 +2\nu \int_0^t \|\nabla\tilde{q}^{\e}_1(s)\|^2 ds \nonumber\\
&=&  - 2 \int_0^t( J(\tilde{\psi^{\e}}_1, q_1^{h_{\e}}), \tilde{q}_1^{\e}) ds -2\beta\int_0^t( \frac{\partial \tilde{\psi_1^{\e}}}{\partial x}, \tilde{q}_1^{\e})ds \nonumber\\
& & + 2\nu F_1 \int_0^t (\D\tilde{\psi^{\e}_1} - \tilde{\psi^{\e}_2},\tilde{q}^{\e}_1) ds  + 2\int_0^t(\s_1(q_{h_\e})h^\e_1-\s_1(q_h)h_1, \tilde{q}^{\e}_1)ds \nonumber \\
& &  +  2\sqrt{\e}\int_0^t (\tilde{q}_1^{\e}, \s_1(q_{h_\e})
dW^1(s))
+ \e\int_0^t\|\s_1(q_{h_\e})Q_1^\frac12\|^2_{H.S.}ds   \nonumber\\
&\leq &   C_1  \int_0^t\|\tilde{q^{\e}}_1\| \; \|\frac{\partial \tilde{\psi^{\e}_1}}{\partial x}\|ds
 +C_2\int_0^t \| \triangle\tilde{\psi^{\e}}_1\|\; \|\nabla q^{h_{\e}}_1\| \;\|\tilde{q}^{\e}_1\|\;  ds \nonumber \\
& & + 2\nu F_1 \int_0^t (\|\D\tilde{\psi^{\e}_1}\| + \|\D\tilde{\psi^{\e}_2}\|)\; \|\tilde{q}^{\e}_1\| ds   \nonumber \\
& & + 2 \int_0^t {\|(\s_1(q_{h_\e})-\s_1(q_h))h^{\e}_1\|}\| \tilde{q}^{\e}_1\| ds  + \int_0^t(\s_1(q)(h^\e_1-h_1), \tilde{q}^{\e}_1)ds            \nonumber \\
& & + 2\sqrt{\e}\int_0^t (\tilde{q}_1^{\e}, \s_1(q_{h_\e})
dW^1(s))
+ \e K\int_0^t (1+|q_{h_\e}|^2) ds         \nonumber \\
&\leq&  {1 \over 2}C_1( \int_0^t\|\tilde{q}^{\e}_1\|^2 ds +
\int_0^t \|\frac{\partial \tilde{\psi^{\e}_1}}{\partial x}\| ^2ds)
+ {1 \over 2}C_2(
\int_0^t \| \triangle\tilde{\psi}^{\e}_1\|^2 ds + \int_0^t \|\nabla q_1^{h_\e}\|^2 \; \|\tilde{q}^{\e}_1\|^2   ds )  \nonumber \\
& & + \nu F_1 \int_0^t(\|\D\tilde{\psi^{\e}_1}\|^2 + \|\D\tilde{\psi^{\e}_2}\|^2 + 2\|\tilde{q}^{\e}_1\|^2) ds \nonumber \\
& & + 2 \int_0^t  \sqrt{L}\;\|\tilde{q^{\e}}\| \; \|h^\e\| \;
\|\tilde{q}^{\e}_1\|ds
 +\int_0^t(\s_1(q)(h^\e_1-h_1), \tilde{q}^{\e}_1)ds   \nonumber \\
& & + 2\sqrt{\e}\int_0^t (\tilde{q}_1^{\e}, \s_1(q_{h_\e})
dW^1(s))
+ \e K\int_0^t (1+|q_{h_\e}|^2) ds         \nonumber \\
&\leq&    {1 \over 2}C_1( \int_0^t\|\tilde{q}^{\e}_1\|^2 ds +
\int_0^t \|\frac{\partial \tilde{\psi^{\e}_1}}{\partial x}\| ^2ds)
+ {1 \over 2}C_2( \int_0^t \|\triangle\tilde{\psi^{\e}}_1\|^2 ds +
\int_0^t \|\nabla q^{h_
\e}_1\|^2 \; \|\tilde{q^{\e}}_1\|^2  ds ) \nonumber \\
& & + \nu F_1 \int_0^t(\|\D\tilde{\psi^{\e}_1}\|^2 + \|\D\tilde{\psi^{\e}_2}\|^2 + 2\|\tilde{q}^{\e}_1\|^2) ds \nonumber \\
& & + \int_0^t \|\tilde{q}^{\e}\|^2ds + L \int_0^t
\|h^\e\|^2\|\tilde{q}_1\|^2 ds
 + \int_0^t(\s_1(q)(h^\e_1-h_1), \tilde{q}^{\e}_1)ds  \nonumber \\ 
& & + 2\sqrt{\e}\int_0^t (\tilde{q}_1^{\e}, \s_1(q_{h_\e})
dW^1(s)) + \e K\int_0^t (1+|q_{h_\e}|^2) ds .\nonumber\\
\label{error3}
\end{eqnarray}


Similarly

\begin{eqnarray}
&& \|\tilde{q}^{\e}_2(t)\|^2 +2\nu \int_0^t \|\nabla\tilde{q}^{\e}_2(s)\|^2 ds  \nonumber \\
&=&- 2 \int_0^t( J(\tilde{\psi}^{\e}_2, q^{h_\e}_2),
\tilde{q}^{\e}_2)ds -r\int_0^t
(\bigtriangleup\tilde{\psi}^{\e}_2,\tilde{q}^{\e}_2)ds
-2\beta\int_0^t(
\frac{\partial \tilde{\psi^{\e}_2}}{\partial x}, \tilde{q}^{\e}_2)ds \nonumber\\
& & + 2\nu F_2 \int_0^t (\D(\tilde{\psi^{\e}_2} - \tilde{\psi^{\e}_1}),\tilde{q}^{\e}_2) ds + 2\int_0^t(\s_2(q_{h_\e})h^\e_2-\s_2(q_h)h_2, \tilde{q}^{\e}_2)ds \nonumber \\
&& +  2\sqrt{\e}\int_0^t (\tilde{q}^{\e}_2, \s_2(q_{h_\e})
dW^2(s))
+ \e\int_0^t\|\s_2(q_{h_\e})Q_2^\frac12\|^2_{H.S.}ds   \nonumber \\
&\leq&    C_1  \int_0^t\|\tilde{q}^{\e}_2\| \; \|\frac{\partial
\tilde{\psi^{\e}_2}}{\partial x}\|ds
 +C_2\int_0^t \| \triangle\tilde{\psi}^{\e}_2\|\; \|\nabla q^{h_\e}_2\| \; \|\tilde{q}^{\e}_2\|\;  ds  \nonumber \\
& & + C_3\int_0^t \|\bigtriangleup\tilde{\psi}^{\e}_2\| \;\|\tilde{q}^{\e}_2\|ds + 2\nu F_2 \int_0^t (\|\D\tilde{\psi^{\e}_2}\| + \|\D\tilde{\psi^{\e}_2}\|)\; \|\tilde{q}^{\e}_2\| ds   \nonumber \\
& & +  2 \int_0^t \big\{\|(\s_2(q_{h_\e})-\s_2(q_h))h^\e_2\|\big\} \|\tilde{q}^{\e}_2\|ds +  \int_0^t(\s_2(q)(h^\e_2-h_2), \tilde{q}^{\e}_2)ds            \nonumber \\
&& +  2\sqrt{\e}\int_0^t (\tilde{q}^{\e}_2, \s_2(q_{h_\e})
dW^2(s))
+ \e\int_0^t\|\s_2(q_{h_\e})Q_2^\frac12\|^2_{H.S.}ds   \nonumber \\
&\leq&  {1 \over 2}C_1( \int_0^t\|\tilde{q}^{\e}_2\|^2 ds +
\int_0^t
\|\frac{\partial \tilde{\psi^{\e}_2}}{\partial x}\| ^2ds) \nonumber \\
& &+ {1 \over 2}C_2(\int_0^t \| \triangle\tilde{\psi}^{\e}_2\|^2 ds + \int_0^t \|\nabla q^{h_\e}_2\|^2\; \|\tilde{q}^{\e}_2\|^2  ds ) \nonumber \\
& & + {1 \over 2}C_3\int_0^t( \|\bigtriangleup\tilde{\psi}^{\e}_2\|^2 + \|\tilde{q}^{\e}_2\|^2) ds + \nu F_2 \int_0^t(\|\D\tilde{\psi^{\e}_2}\|^2 + \|\D\tilde{\psi^{\e}_1}\|^2 + 2\|\tilde{q}^{\e}_2\|^2) ds \nonumber \\
& &+ 2 \int_0^t  \sqrt{L}\;\|\tilde{q}^{\e}\| \; \|h^\e\| \;
\|\tilde{q}_2\|ds
 +\int_0^t(\s_2(q)(h^\e_2-h_2), \tilde{q}^{\e}_2)ds     \nonumber \\
& &  +2\sqrt{\e}\int_0^t (\tilde{q}_2^{\e}, \s_2(q_{h_\e})
dW^2(s))
+ \e K\int_0^t(1+|q_{h_\e}|^2)ds   \nonumber \\
&\leq &   {1 \over 2}C_1( \int_0^t\|\tilde{q}^{\e}_2\|^2 ds +
 \int_0^t \|\frac{\partial \tilde{\psi^{\e}_2}}{\partial x}\| ^2ds) \nonumber\\
& &+{1 \over 2}C_2(\int_0^t \| \triangle\tilde{\psi}^{\e}_2\|^2 ds + \int_0^t \|\nabla q^{h_\e}_2\|^2\; \|\tilde{q}^{\e}_2\|^2  ds )\nonumber \\
& & + {1 \over 2}C_3 \int_0^t \|\bigtriangleup\tilde{\psi}^{\e}_2\|^2 ds  + \nu F_2 \int_0^t(\|\D\tilde{\psi^{\e}_2}\|^2 + \|\D\tilde{\psi^{\e}_1}\|^2 + 2\|\tilde{q}^{\e}_2\|^2) ds \nonumber \\
& & + \int_0^t \|\tilde{q}^{\e}\|^2ds + L \int_0^t
\|h^n\|^2\|\tilde{q}^{\e}_2\|^2 ds
 + \int_0^t(\s_2(q)(h^\e_2-h_2), \tilde{q}^{\e}_2)ds    \nonumber \\ 
& & +2\sqrt{\e}\int_0^t (\tilde{q}_2^{\e},
\s_2(q_{h_\e}) dW^2(s)) + \e K\int_0^t(1+|q_{h_\e}|^2)ds  \  . \nonumber \\
\label{error4}
\end{eqnarray}

Adding (\ref{error3}) and (\ref{error4}), we obtain an integral
inequality for $\|\tilde{q}^{\e}(t)\|^2 $
 which
involves $q_h=( q_1^h, q_2^h)$:

\begin{eqnarray} \label{total-error}
& & \|\tilde{q}(t)\|^2 +2\nu \int_0^t \|\nabla\tilde{q}(s)\|^2ds
\leq
\int_0^t \Big\{C_4 + C_5\|q_h\|^2 + L \;  \|h^{\e}\|_0^2 \Big\}\;  \|\tilde{q}^{\e}\|^2  ds \nonumber \\
&& \qquad + T_1(t,\epsilon) +  T_2(t,\epsilon)   + T_3(t,\epsilon) ,
\end{eqnarray} where
\begin{align*}
T_1(t,\e)=& 2\sqrt{\e}\int_0^t \big( \tilde{q}_\e(s), \s(q_{h_\e}(s))\,  dW(s)  \big),\\
T_2(t,\e)= & \e K  \int_0^t (1+|q_{h_\e}(s)|^2) ds , \\
T_3(t,\e)=& 2\int_0^t \big( \s(q_h(s))\, \big( h_{\e}(s)-h(
s)\big), \, \tilde{q}_\e(s)\big)\, ds.
\end{align*}

Our goal is to show that  as $\e \to 0$, $\|\tilde{q}(t)\|^2 + \int_0^t \|\nabla\tilde{q}(s)\|^2ds \to 0$ in probability, which
implies that $q_{h_\e} \to q_h$  in distribution in $C([0, T]; {\mathbb{H}}) \cap L^2((0, T); {\mathbb{H}}_0^1))$, as $\e \to
0$.

Fix $N>0$ and for $t\in [0,T]$ let
\begin{eqnarray*}  G_N(t)&=&\Big\{ \omega \, :\, \Big (\|q_h(t)\|^2 \leq N
\Big) and
\Big(  \int_0^T \|\nabla q_h(t)\|^2\, dt  \leq N \Big) \Big\} , \\
G_{N,\e}(t)&=&  G_N(t)\cap \Big\{ \|q_{h_\e}(t)\|^2 \leq N\Big\} \cap
 \Big\{ \int_0^T \|\nabla q_{h_\e}(t)\|^2\, dt\leq N
\Big\} .
\end{eqnarray*}

\emph{\textbf{Claim 1.}} For any $\e_0>0$, $ {\displaystyle
\sup_{0<\e\leq \e_0}\; \sup_{h,h_\e \in {\mathcal A}_M}
\PX(G_{N,\e}(T)^c )\to 0 \; \mbox{\rm as }\; N\to \infty.}$
\\
Indeed, for $\e>0$, $h,h_\e \in {\mathcal A}_M$, the Markov
inequality and the estimate \eqref{boundgeneral} imply
\begin{align*}
&  \PX ( G_{N,\e}(T)^c ) \leq   \PX \Big(\|q_h(t)\|^2  > N \Big)
 +\PX \Big(\|q_{h_\e}(t)\|^2  > N \Big) \\
&\qquad \qquad\qquad   +  \PX\  \Big( \int_0^T \|\nabla
q_h(t)\|^2\, dt \Big)  >N\Big)
+ \PX\Big(\int_0^T \|\nabla q_{h_\e}(t)\|^2\, dt \Big) > N \Big)\\
&\leq  \frac{1}{N}
  \sup_{ 
\;h, h_\e \in {\mathcal A}_M}
 \EX
\Big( \|q_h(t)\|^2  + \|q_{h_\e}(t)\|^2  
+ \int_0^T \|\nabla
q_h(t)\|^2\, dt +  \int_0^T \|\nabla q_{h_\e}(t)\|^2\, dt \Big) \\
&\leq
 {C_1(\nu, \k,K,L, T, M) \, \big(1+ \EX|\xi|^2\big)}{N}^{-1}.
\end{align*}
\bigskip

\emph{\textbf{Claim 2.}} For fixed $N>0$,
 $h, h_\e \in {\mathcal A}_M$  such that  as $\e \to 0$, $h_\e\to h$ a.s. in the weak topology
 of $L^2([0,T],H_0)$,
 one has  as $\e \to 0$
\begin{equation} \label{cv1}
\EX\Big[ 1_{G_{N,\e}(T)} \Big( \|\tilde{q}_h(t)\|^2 + \int_0^T
\|\nabla \tilde{q}_h(t)\|^2\, dt\Big) \Big] \to 0.
\end{equation}

The  Claim 2  can be similarly proved as in \cite{DuanMillet}.

\smallskip

To conclude the proof of the Lemma \ref{weakconv}, let $\delta>0$
and $\alpha >0$ and set
\[  \Lambda_\e := |\tilde{q}_\e|^2_X =  \|\tilde{q}_h(t)\|^2 + \int_0^T
\|\nabla \tilde{q}_h(t)\|^2\, dt.   \] Then the Markov inequality
implies that
\[
\PP(\Lambda_\e > \de )  =  \PP(G_{N,\e}(T)^c )+ \frac{1}{\delta} \EX\Big( 1_{G_{N,\e}(T)} |\tilde{q}_\e|^2_X\Big).
\]
By \emph{Claim 1}, we can choose $N$ large enough so that $
\PP(G_{N,\e}(T)^c)<\alpha$ for every $\e$. Fix $N$, {\it Claim 2}
then implies that for  $\e$ small enough,
 $ \EX\Big( 1_{G_{N,\e}(T)} |\tilde{q}_\e|^2_X\Big)
< \delta  \alpha$. This concludes the proof of the Lemma
\ref{weakconv}.
\end{proof}


\subsection{Compactness}

The following compactness result will show that the rate function
of the LDP  satisfied by the solution
 to \eqref{scontrol} is a good rate function.

\begin{lemma} (Compactness) \label{compact}\\
Let $M$ be any fixed finite positive number. Define
$$
K_M=\{q_h \in  C([0, T]; {\mathbb{H}}) \cap L^2((0, T);{\mathbb{H}}^1_0 ):  h \in S_M \},
$$
where $q_h$ is the unique solution of the control equation:
\begin{eqnarray} \label{dcontrol}
\quad d q_h(t) + \big[Aq_h(t) +F(q_h(t))\big]dt =\s(q_h(t)) h(t)
dt, \;\; q_h(0)=\xi.
\end{eqnarray}
Then $K_M$ is a compact subset in $ X$.
\end{lemma}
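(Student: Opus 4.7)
The plan is to argue by sequential compactness: take any sequence $\{q_{h_n}\} \subset K_M$ with $h_n \in S_M$ and show we can extract a subsequence converging in the norm of $X$ to some $q_h$ with $h \in S_M$. Recall from the paper's set-up that $S_M$ is a bounded ball in $L^2(0,T;\mathbb{H}_0)$ which, endowed with the weak topology, is metrizable and compact (this is the observation that makes $S_M$ a Polish space in the sense used for $\mathcal{A}_M$). So I first extract a subsequence (not relabeled) with $h_n \rightharpoonup h$ in $L^2(0,T;\mathbb{H}_0)$ and $h \in S_M$.

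Next I invoke the uniform a priori bound \eqref{control-norm2} from the Remark following Theorem \ref{wellposeness}: for every $n$,
\begin{equation*}
\sup_{0\leq t\leq T}\|q_{h_n}(t)\|^2 + \int_0^T \sup_{0\leq \tau\leq s}\|\nabla q_{h_n}(\tau)\|^2\, ds \leq C_2(F_1,F_2,\nu,r,K,T,M,\|\xi\|).
\end{equation*}
In particular $\{q_{h_n}\}$ is bounded in $L^\infty(0,T;\mathbb{H}) \cap L^2(0,T;\mathbb{H}_0^1)$. Using the control equation and the bounds on $F(q_{h_n})$, $Aq_{h_n}$, $\sigma(q_{h_n})h_n$, the time derivative $\partial_t q_{h_n}$ is bounded in $L^2(0,T;V^*)$ for a suitable dual space $V^*$ (e.g.\ $(\mathcal{D}(A))^*$). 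Thus the Aubin–Lions–Simon compactness lemma yields a further subsequence with $q_{h_n} \to q$ strongly in $L^2(0,T;\mathbb{H})$ and weakly in $L^2(0,T;\mathbb{H}_0^1)$, together with weak-$*$ convergence in $L^\infty(0,T;\mathbb{H})$.

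Now I pass to the limit in the weak formulation of the control equation. The linear term $Aq_{h_n}$ passes to the limit by the weak convergence in $L^2(0,T;\mathbb{H}_0^1)$. The nonlinearity $F(q_{h_n})$ is handled using Lemma \ref{Jacobi} and Lemma \ref{bound}: the trilinear Jacobi estimates (in particular \eqref{jac-2a}) combined with strong convergence of $q_{h_n}$ in $L^2(0,T;\mathbb{H})$ and weak convergence in $L^2(0,T;\mathbb{H}_0^1)$ allow us to identify the limit of $\int_0^t J(\psi_{h_n,i}, q_{h_n,i}+\beta y)\, ds$ against test functions. The delicate term is the control term $\sigma(q_{h_n})h_n$, where one factor converges only weakly: the Lipschitz assumption (A.3) together with strong $L^2$ convergence of $q_{h_n}$ forces $\sigma(q_{h_n}) \to \sigma(q)$ strongly in $L^2(0,T;L_Q)$, and coupling strong$\times$weak convergence is enough to pass to the limit. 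This identifies $q = q_h$ where $q_h = \mathcal{G}^0(\int_0^\cdot h(s)\,ds)$, which belongs to $K_M$.

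The main obstacle is upgrading the convergence $q_{h_n}\to q_h$ to the strong topology of $X$, i.e.\ in $C([0,T];\mathbb{H})$ and with the sup-in-time-under-integral norm \eqref{norm} on the gradient. For this I set $\tilde q_n := q_{h_n}-q_h$, write its equation (as in \eqref{difference2} but with $\varepsilon=0$), and run an energy estimate exactly in the style of \eqref{error3}–\eqref{total-error}: the Jacobi bilinear terms produce a Gronwall factor $\int_0^t \bigl\{C_4 + C_5 \|q_h\|^2 + L\|h_n\|_0^2\bigr\}\,\|\tilde q_n\|^2\,ds$, which is integrable uniformly in $n$ because $h_n \in S_M$; and the driving term becomes
\begin{equation*}
\int_0^t \bigl(\sigma(q_h)(h_n-h),\,\tilde q_n\bigr)\,ds + \int_0^t \bigl((\sigma(q_{h_n})-\sigma(q_h))h_n,\,\tilde q_n\bigr)\,ds,
\end{equation*}
whose first piece tends to $0$ by the weak convergence $h_n\rightharpoonup h$ together with strong compactness arguments (as in \textit{Claim 2} of Lemma \ref{weakconv}, which was used in the deterministic $\varepsilon=0$ reduction of \eqref{total-error}), and whose second piece is absorbed by Gronwall via (A.3). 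Applying the deterministic Gronwall lemma then gives $\sup_{t\leq T}\|\tilde q_n(t)\|^2 + \nu\int_0^T \|\nabla \tilde q_n\|^2\,ds \to 0$, which is precisely convergence in the $X$-norm and completes the proof. The only nontrivial point is that all of this is deterministic, so one does not need Lemma \ref{gronwall} or the Burkholder–Davis–Gundy manipulations from the weak convergence proof — the compactness argument is actually a simplified version of Lemma \ref{weakconv}.
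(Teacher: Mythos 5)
Your overall strategy is sound, and the conclusion is correct, but it follows a genuinely different route from the paper's. The paper's proof never invokes Aubin--Lions. Instead, after extracting the weakly convergent subsequence $h^n \rightharpoonup h$ and letting $q := q_h$ be the solution of the limit control equation (which exists by Theorem~\ref{wellposeness}), it directly estimates $\|\tilde q_n\|_X$ where $\tilde q_n = q^n - q$, and it handles the troublesome driving term $\int_0^t(\sigma(q)(h^n-h),\tilde q_n)\,ds$ via a dyadic time-discretization: the term is split into four pieces $I^1_{n,N},\dots,I^4_{n,N}$ with $t_k = kT2^{-N}$, the first three are controlled uniformly in $n$ by $C2^{-N/4}$ using Lemma~\ref{timeincrement} (time increments of the solution), and $I^4_{n,N}\to 0$ as $n\to\infty$ for fixed $N$ by combining weak convergence of $h^n$ with compactness of the operator $\sigma(q(t_k))$. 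This is the deterministic analogue of \emph{Claim 2} in the weak convergence proof, and it stays entirely within the a priori estimates the paper has already proved.

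Your route replaces the time-discretization with an Aubin--Lions extraction step, and then exploits the resulting strong $L^2(0,T;\mathbb{H})$ convergence $\tilde q_n \to 0$ to kill the term $\int_0^t(\sigma(q_h)(h_n-h),\tilde q_n)\,ds$ by plain Cauchy--Schwarz (bounded times strongly convergent to zero). This buys you simplicity in the driving-term argument, but at the cost of an extra verification you only wave at: you must show $\partial_t q_{h_n}$ is bounded in an appropriate dual space. Since $A$ is a fourth-order operator in $\psi$ (but effectively second-order in $q$ after inverting the Helmholtz-type relation $q = \Delta\psi - F(\psi-\cdot)$ via Lemma~\ref{bound}), the correct space is roughly $L^2(0,T;H^{-1})$ plus a small correction for the Jacobi nonlinearity; the paper never establishes this bound, so if you take the Aubin--Lions route you owe a short computation. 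Also, having invoked Aubin--Lions, your further reference to ``Claim 2'' mechanisms in the final paragraph is redundant and slightly confusing: you have either strong $L^2$ convergence from Aubin--Lions (in which case Cauchy--Schwarz finishes it) or a time-discretization à la Claim~2 (in which case Aubin--Lions is superfluous); you should commit to one. Finally, your Aubin--Lions route requires an explicit identification step (passing to the limit in the weak formulation to show the $L^2$-limit is $q_h$), which the paper simply does not need because it never produces a limit object independently of $q_h$ -- it defines $q_h$ up front and estimates the difference.

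Your observation that the argument is a deterministic simplification of Lemma~\ref{weakconv} -- no random Gronwall lemma, no Burkholder--Davis--Gundy -- is accurate and matches the paper.
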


\begin{proof}
Let $q^n$ be a sequence in $K_M$, corresponding to solutions of
(\ref{dcontrol}) with controls $h^n$ in $S_M$:
\begin{eqnarray} \label{dcontroln}
\qquad d q^n(t) + \big[Aq^n(t) +F(q^n(t)) \big]dt =\s(q^n(t))
h^n(t) dt, \;\; q^n(0)=\xi.
\end{eqnarray}

Since $S_M$ is a bounded closed subset in the Hilbert space $L^2((0, T); {\mathbb{H}}_0)$, it is weakly compact. So there exists
a subsequence of $h^n$, still denoted as $h^n$, which converges weakly to a limit $h$ in $L^2((0, T); {\mathbb{H}}_0)$. Note
that in fact $h \in S_M$ as $S_M$ is closed. We now show that the corresponding subsequences of solutions, still denoted as
$q^n$, converges in $ X$
 to $q$ which is the solution of the
following ``limit'' equation
\begin{eqnarray} \label{limiteqn}
d q(t) + [Aq(t) +F(q(t))]dt =\s(q(t)) h(t) dt, \;\; q(0)=\xi.
\end{eqnarray}
This will complete the proof of the compactness of $K_M$.

Let $\tilde{q}=q^n-q$, or in component form
$\tilde{q}=(\tilde{q}_1, \tilde{q}_2)=(q^n_1-q_1, q^n_2-q_2)$.
\begin{eqnarray}\label{difference}
\qquad d \tilde{q} + [A\tilde{q} +F(q^n)-F(q)]dt =[\s(q^n) h^n
-\s(q) h] dt, \;\; \tilde{q}(0)=0.
\end{eqnarray}
In component form,  $\tilde{q}_1(0)=0$, $\tilde{q}_2(0)=0$ and
\begin{eqnarray*}
\frac{\partial \tilde{q_1}}{\partial t} + J(\psi^n_1, q^n_1 + \beta  y )&-&J(\psi_1, q_1 + \beta  y ) = \nu \D^2  \tilde{\psi_1} \\
         &+& \big[ (\s_1(q^n)-\s_1(q))h^n_1+\s_1(q)(h^n_1-h_1)\big]\  \; ,\\
\frac{\partial \tilde{q_2}}{\partial t} + J(\psi^n_2, q^n_2 + \beta  y )&-& J(\psi_2, q_2 + \beta  y )= \nu \D^2  \tilde{\psi_2 } - r \D  \tilde{\psi_2 } \\
         &+& \big[(\s_2(q^n)- \s_2(q)) h^n_2+\s_2(q)(h^n_2- h_2)\big]\ \; .
\end{eqnarray*}
After the following transformations:
\begin{eqnarray*}
& &\frac{\partial \tilde{q_1}}{\partial t} + J(\psi^n_1, q^n_1 + \beta  y ) - J(\psi_1, q_1 + \beta  y )\\
&=&  \nu \D^2  \tilde{\psi_1}  + \big[ (\s_1(q^n)-\s_1(q))h^n_1+\s_1(q)(h^n_1-h_1)\big]\  \; .\\
\end{eqnarray*}
Since the Jacobian operator is bilinear:
\begin{eqnarray*}
& & \frac{\partial \tilde{q_1}}{\partial t} + J(\psi^n_1, q^n_1) -
J(\psi_1, q_1 ) + \beta\frac{\partial \tilde{\psi_1}}{\partial
x}\\
&=& \nu \D^2  \tilde{\psi_1}  + \big[ (\s_1(q^n)-\s_1(q))h^n_1+\s_1(q)(h^n_1-h_1)\big]\  \; .\\
\end{eqnarray*}
By adding and subtracting the same item $J(\psi^n_1, q_1 )$, we
get:
\begin{eqnarray*}
& &\frac{\partial \tilde{q}_1}{\partial t} + J(\psi^n_1, q^n_1 ) -
J(\psi^n_1, q_1 ) + J(\psi^n_1, q_1 )  - J(\psi_1, q_1  ) +
\beta\frac{\partial \tilde{\psi_1}}{\partial x}\\
&=& \nu \D^2  \tilde{\psi_1}  + \big[ (\s_1(q^n)-\s_1(q))h^n_1+\s_1(q)(h^n_1-h_1)\big]\  \; .\\
\end{eqnarray*}
Finally, we have:
\begin{eqnarray*}
& &\frac{\partial \tilde{q_1}}{\partial t} + J(\psi^n_1,
\tilde{q}_1 ) + J(\tilde{\psi}_1, q_1 ) +
\beta\frac{\partial \tilde{\psi_1}}{\partial x}\\
&=& \nu \D(\tilde{q_1} +  F_1(\tilde{\psi_1} - \tilde{\psi_2}))  + \big[ (\s_1(q^n)-\s_1(q))h^n_1+\s_1(q)(h^n_1-h_1)\big]\  \; .\\
\end{eqnarray*}

Thus, on any finite time interval $[0, T]$, the It\^o's formula, (A.3) and the Young's inequality imply
\begin{eqnarray*}
&& \|\tilde{q}_1(t)\|^2 + 0 + 2 \int_0^t( J(\tilde{\psi}_1, q_1),
\tilde{q}_1) ds + 2\beta\int_0^t( \frac{\partial
\tilde{\psi_1}}{\partial x},\ \tilde{q}_1)ds  \\
\nonumber \\
&=&  2\nu \int_0^t (\D\tilde{q}_1(s),\tilde{q}_1(s))ds + 2\nu F_1 \int_0^t (\D(\tilde{\psi_1} - \tilde{\psi_2}),\tilde{q}_1) ds\\
&& + 2\int_0^t(\s_1(q^n)h^n_1-\s_1(q)h_1,\ \tilde{q}_1)ds. \nonumber \\
\nonumber \\
\end{eqnarray*}

\begin{eqnarray}
&& \|\tilde{q}_1(t)\|^2 +2\nu \int_0^t \|\nabla\tilde{q}_1(s)\|^2 ds \nonumber\\
&=&   - 2 \int_0^t( J(\tilde{\psi}_1, q_1), \tilde{q}_1) ds - 2\beta\int_0^t( \frac{\partial \tilde{\psi_1}}{\partial x}, \tilde{q}_1)ds \nonumber \\
&& + 2\nu F_1 \int_0^t (\D(\tilde{\psi_1} - \tilde{\psi_2}),\tilde{q}_1) ds + 2\int_0^t(\s_1(q^n)h^n_1-\s_1(q)h_1, \tilde{q}_1)ds \nonumber \\
&\leq&    C_1  \int_0^t\|\tilde{q}_1\| \; \|\frac{\partial
\tilde{\psi_1}}{\partial x}\|ds +C_2\int_0^t \| \triangle\tilde{\psi}_1\|\;\|\nabla q_1\| \; \|\tilde{q}_1\|\;  ds  \nonumber \\
&&+  2\nu F_1 \int_0^t (\|\D\tilde{\psi_1}\| + \|\D\tilde{\psi_2}\|)\; \|\tilde{q}_1\| ds   \nonumber \\
&&+  2 \int_0^t \big\{\|(\s_1(q^n)-\s_1(q))h^n_1\|\big\} \|\tilde{q}_1\|ds + 2\int^t_0(\s_1(q)(h^n_1-h_1),\tilde{q}_1) ds              \nonumber \\
&\leq&  {1 \over 2}C_1( \int_0^t\|\tilde{q}_1\|^2 ds + \int_0^t
\|\frac{\partial \tilde{\psi_1}}{\partial x}\| ^2ds) + {1 \over
2}C_2(
\int_0^t \| \triangle\tilde{\psi}_1\|^2 ds + \int_0^t \|\nabla q_1\|^2\; \|\tilde{q}_1\|^2   ds ) \nonumber \\
&& + \nu F_1 (\int_0^t(\|\D\tilde{\psi_1}\|^2 + \|\D\tilde{\psi_2}\|^2) + 2\|\tilde{q}_1\|^2 ds) \nonumber \\
&& + 2 \int_0^t  \sqrt{L}\;\|\tilde{q}\| \; \|h^n\| \;
\|\tilde{q}_1\|ds
 2\int^t_0(\s_1(q)(h^n_1-h_1),\tilde{q}_1) ds   \nonumber \\
&\leq&   {1 \over 2}C_1( \int_0^t\|\tilde{q}_1\|^2 ds + \int_0^t
\|\frac{\partial \tilde{\psi_1}}{\partial x}\| ^2ds) + {1 \over
2}C_2(
\int_0^t \| \triangle\tilde{\psi}_1\|^2 ds + \int_0^t \|\nabla q_1\|^2\; \|\tilde{q}_1\|^2  ds ) \nonumber \\
\nonumber \\
&& + \nu F_1 (\int_0^t(\|\D\tilde{\psi_1}\|^2 + \|\D\tilde{\psi_2}\|^2) + 2\|\tilde{q}_1\|^2 ds) \nonumber \\
&& + \int_0^t \|\tilde{q}\|^2ds + L \int_0^t
\|h^n\|^2\|\tilde{q}_1\|^2 ds
 +2\int^t_0(\s_1(q)(h^n_1-h_1),\tilde{q}_1) ds . \nonumber\\
\label{error1}
\end{eqnarray}

Similarly,

\begin{eqnarray*}
&& \|\tilde{q}_2(t)\|^2 + 0 + 2 \int_0^t( J(\tilde{\psi}_2, q_2),
\tilde{q}_2) ds + 2\beta\int_0^t( \frac{\partial
\tilde{\psi_2}}{\partial x}, \tilde{q}_2)ds  \\
\nonumber \\
&=& -r\int_0^t (\bigtriangleup\psi_2,\tilde{q}_2)ds + 2\nu \int_0^t (\D\tilde{q}_2(s),\tilde{q}_2(s))ds \\
&& + 2\nu F_2 \int_0^t (\D(\tilde{\psi_2} - \tilde{\psi_1}),\tilde{q}_2) ds + 2\int_0^t(\s_2(q^n)h^n_2-\s_2(q)h_2, \tilde{q}_2)ds.\nonumber \\
\end{eqnarray*}
\begin{eqnarray}
&& \|\tilde{q}_2(t)\|^2 +2\nu \int_0^t \|\nabla\tilde{q}_2(s)\|^2 ds \nonumber\\
&=& - 2 \int_0^t( J(\tilde{\psi}_2, q_2), \tilde{q}_2)ds -r\int_0^t (\bigtriangleup\psi_2,\tilde{q}_2ds -2\beta\int_0^t(
\frac{\partial \tilde{\psi_2}}{\partial x}, \tilde{q}_2)ds \nonumber \\
\nonumber \\
& & + 2\nu F_2 \int_0^t (\D(\tilde{\psi_2} - \tilde{\psi_1}),\tilde{q}_2) ds + 2\int_0^t(\s_2(q^n)h^n_2-\s_2(q)h_2, \tilde{q}_2)ds \nonumber \\
&\leq&   C_1  \int_0^t\|\tilde{q}_2\| \; \|\frac{\partial \tilde{\psi_2}}{\partial x}\|ds
 +C_2\int_0^t \| \triangle\tilde{\psi}_2\|\; \|\nabla q_2\| \; \|\tilde{q}_2\|\;  ds  \nonumber \\
&& + C_3\int_0^t \|\bigtriangleup\psi_2\| \;\|\tilde{q}_2\|ds + 2\nu F_2 \int_0^t (\|\D\tilde{\psi_2}\| + \|\D\tilde{\psi_2}\|)\; \|\tilde{q}_2\| ds   \nonumber \\
&& +  2 \int_0^t \big\{\|(\s_2(q^n)-\s_2(q))h^n_2\|\big\} \|\tilde{q}_2\| ds   + 2\int^t_0(\s_2(q)(h^n_2-h_2),\tilde{q}_2) ds             \nonumber \\
&\leq&  {1 \over 2}C_1( \int_0^t\|\tilde{q}_2\|^2 ds + \int_0^t
\|\frac{\partial \tilde{\psi_2}}{\partial x}\| ^2ds) + {1 \over
2}C_2(
\int_0^t \| \triangle\tilde{\psi}_2\|^2 ds + \int_0^t \|\nabla q_2\|^2\; \|\tilde{q}_2\|^2  ds )\nonumber \\
&& + {1 \over 2}C_3\int_0^t (\|\bigtriangleup\psi_2\|^2 + \|\tilde{q}_2\|^2) ds + \nu F_2 (\int_0^t(\|\D\tilde{\psi_2}\|^2 + \|\D\tilde{\psi_1}\|^2 + 2\|\tilde{q}_2\|^2) ds) \nonumber \\
&& + 2 \int_0^t  \sqrt{L}\;\|\tilde{q}\| \; \|h^n\| \;
\|\tilde{q}_2\|ds+
 2\int^t_0(\s_2(q)(h^n_2-h_2),\tilde{q}_2) ds    \nonumber \\
&\leq&   {1 \over 2}C_1( \int_0^t\|\tilde{q}_2\|^2 ds + \int_0^t
\|\frac{\partial \tilde{\psi_2}}{\partial x}\| ^2ds) + {1 \over
2}C_2(
\int_0^t \| \triangle\tilde{\psi}_2\|^2 ds + \int_0^t \|\nabla q_2\|^2\; \|\tilde{q}_2\|^2  ds ) \nonumber \\
&&+ {1 \over 2}C_3 \int_0^t \|\bigtriangleup\psi_2\|^2 ds  + \nu F_2 (\int_0^t(\|\D\tilde{\psi_2}\|^2 + \|\D\tilde{\psi_1}\|^2 + 2\|\tilde{q}_2\|^2) ds) \nonumber \\
&& + \int_0^t \|\tilde{q}\|^2ds + L \int_0^t
\|h^n\|^2\|\tilde{q}_2\|^2 ds
 + 2\int^t_0(\s_2(q)(h^n_2-h_2),\tilde{q}_2) ds  . \nonumber\\
\label{error2}
\end{eqnarray}

Adding (\ref{error1}) and (\ref{error2}), and by the Theorem
(\ref{wellposeness}) we obtain an integral inequality for
$\|\tilde{q}(t)\|^2= \|\tilde{q}_1(t)\|^2+\|\tilde{q}_2(t)\|^2$
which involves $q=\{q_1, q_2\}$:
\begin{align} \label{error}
 \|\tilde{q}(t)\|^2 +2\nu & \int_0^t \|\nabla\tilde{q}(s)\|^2ds
  \leq 2\int^t_0(\s(q)(h^n-h),\tilde{q}) ds    \nonumber \\
 & +  \int_0^t \Big\{C_4 + C_5\|q\|^2
 +   L \;  |h^n|^2 \Big\}
\; \|\tilde{q}\|^2 ds.
\end{align}

As in \cite{DuanMillet}, for $N\geq 1 $  and $k=0,\cdots,2^N$, set $t_k=k2^{-N}$. For $s\in [t_{k-1}, t_k]$, $1\leq k\leq 2^N$,
let $\bar{s}_N=t_k$. The inequality \eqref{control-norm2} implies that there exists a constant $\bar{C}>0$ such that
 \[ \sup_n \Big[ \|q(t)\|^2 + \int_0^T
\|\nabla \tilde{q}(t)\|^2\, dt + \|\tilde{q}_n(t)\|^2 + \int_0^T
\|\nabla \tilde{q}_n(t)\|^2\, dt \Big] =  \bar{C} <+\infty.\]
 Thus the Gronwall's inequality implies
\begin{equation} \label{errorbound}
 \sup_{ t\leq T} |\tilde{q}_n(t)|^2_X  \leq
\exp\Big(\int_0^t \Big\{C_4 + C_5\|q\|^2
 +   L \;  |h^n|^2 \Big\}
\; \|\tilde{q}\|^2 
 ds\Big)  \sum_{i=1}^4 I_{n,N}^{i}  ,
\end{equation}
where
\begin{eqnarray*}
I_{n,N}^1&=& \int_0^T \big| \big( \s(q(s))\,  [h_n(s)- h(s)]\, ,\, \tilde{q}_n(s)-\tilde{q}_n(\bar{s}_N)\big)\big|\, ds,  \\
I_{n,N}^2&=& \int_0^T\Big|  \Big( \big[ \s(q(s)) -
\s(q(\bar{s}_N))\big]  [h_n(s)-h(s)]\, ,\,
\tilde{q}_n(\bar{s}_N)\Big)\Big| \, ds,  \\
I_{n,N}^3&=& \sup_{1\leq k\leq 2^N} \sup_{t_{k-1}\leq t\leq t_k}
\Big|\Big( \s(q(t_k ))
 \int_{t_{k-1 }}^t (h_\e(s)-h(s))
ds \; ,\; \tilde{q}_\e(t_k) \Big)\Big| , \\
I_{n,N}^4&=& \sum_{k=1}^{2^N} \Big( \s(q(t_k )) \,
\int_{t_{k-1}}^{t_k } [ h_n(s)-h(s)]\, ds   \; ,\; \tilde{q}_n(t_k
) \Big) .
\end{eqnarray*}
The  Cauchy-Schwarz  inequality, (A.2), (A.3) and Lemma
\ref{timeincrement} imply that for some constant $C$ which does
not depend on $n$,
\begin{align} \label{estim1}
I_{n,N}^1 &\leq  \Big( \int_0^T\!\! K (1+\bar{C})
\|h_n(s)-h(s)\|_0^2 ds \Big)^{\frac{1}{2}} \Big( 2 \int_0^T \!\!
\big( \|q_n(s)-q_n(\bar{s}_N)\|^2 + \|q(s)-q(\bar{s}_N)\|^2\big)
ds
 \Big)^{\frac{1}{2}}
\nonumber\\
&\leq C 2^{-\frac{N}{4}} \, ,\\
I_{n,N}^2&\leq  \Big( L\int_0^T \|q(s)-q(\bar{s}_N)\|^2 ds
\Big)^{\frac{1}{2}}
 \Big( \bar{C} \int_0^T \|h_n(s)-h(s)\|_0^2\, ds\Big)^{\frac{1}{2}} \leq C 2^{-\frac{N}{4}}\, ,
\label{estim2}\\
I_{n,N}^3&\leq K \big( 1+\|q(t)\|) \big( \|q(t)\|+\|q_n(t)\|\big)
2^{-\frac{N}{2}} 2M \leq C \tilde{C} 2^{-\frac{N}{2}}\,  .
\label{estim3}
\end{align}
We now use a time discretization argument from Proposition 4.4 of
\cite{DuanMillet}. For given $\alpha
>0$, one may choose $N$ large enough to have $ \sup_n \sum_{i=1}^3
I_{n,N}^i \leq \alpha$. Then, for fixed $N$ and $k=1, \cdots,
2^N$,
 as $n\to \infty$, the weak convergence of $h_n$ to $h$ implies
that of $\int_{t_{k-1}}^{t_k} (h_n(s)-h(s))ds$ to 0 weakly in
$\mathbb{H}_0$. Since $\s(q(t_k))$ is a compact operator, we
deduce that for fixed $k$ the sequence $\s(q(t_k))
\int_{t_{k-1}}^{t_k} (h_n(s)-h(s))ds$ converges to 0 strongly in
$\mathbb{H}$ as $n\to \infty$. Since $\sup_n \sup_k
\|\tilde{q}_n(t_k)\|\leq 2\tilde{C}$, we have
 $\lim_n I_{n,N}^3=0$. Thus as $ n \to \infty$,
$
\|\tilde{q}_n(t)\|^2 \to 0. 
$ Using this convergence and \eqref{error}, we deduce that
$|\tilde{q}|_X \to 0$ as $n\to \infty$.
This shows that every sequence in $K_M$ has a convergent
subsequence. Hence $K_M$ is  a compact subset of $X$.
\end{proof}

With the above results, we have the following theorem.
\begin{theorem} (Large deviation principle) \label{LDP} \\
Let $q^\e$ be the solution of the stochastic two-layer problem
\begin{eqnarray}
\ \ \ \ \ \ \ \ \ d q^\e + [Aq^\e +F(q^\e)]dt = \sqrt{\e} \; \s(q^\e) dW(t),\;\;    q^\e(0)=\xi \in {\mathbb{H}}.
\end{eqnarray}
Then $\{q^\e\}$ satisfies the large deviation principle, in $C([0, T]; {\mathbb{H}}) \cap L^2((0, T); {\mathbb{H}}^1_0)$ with
the good rate function
\begin{eqnarray}
 I_\xi (\psi)= \inf_{\{h \in L^2(0, T; {\mathbb{H}}_0): \; \psi =G^0(\int_0^. h(s)ds) \}}
 \Big\{\frac12 \int_0^T \|h(s)\|_0^2 ds \Big\}.
\end{eqnarray}
Here the infimum of an empty set is taken as infinity.
\end{theorem}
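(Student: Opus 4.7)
The plan is to apply the weak convergence approach of Budhiraja and Dupuis \cite{BD00, BD07}, which reduces the verification of a large deviation principle to checking two structural hypotheses on the solution map $\mathcal{G}^\e$. Because $X = C([0,T];\mathbb{H}) \cap L^2((0,T);\mathbb{H}^1_0)$ endowed with the norm \eqref{norm} is Polish, the Laplace principle is equivalent to the LDP with a good rate function, so it suffices to establish the Laplace principle for $\{q^\e\}$ with the candidate rate function $I_\xi$ displayed in the statement.

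The first step is to recall the variational representation for functionals of a cylindrical Wiener process: for every bounded Borel measurable $f:X \to \mathbb{R}$,
\begin{equation*}
-\log \EX\Big[\exp\big(-f(\mathcal{G}^\e(\sqrt{\e}W))\big)\Big]
 = \inf_{h \in \mathcal{A}}\EX\Big[\tfrac{1}{2}\int_0^T \|h(s)\|_0^2\, ds + f\Big(\mathcal{G}^\e\big(\sqrt{\e}W_\cdot + \textstyle\int_0^\cdot h(s)\, ds\big)\Big)\Big].
\end{equation*}
The Budhiraja--Dupuis criterion asserts that $\{q^\e\}$ satisfies the Laplace principle (hence the LDP) with good rate function
$I_\xi(\psi) = \inf\{\tfrac{1}{2}\int_0^T \|h\|_0^2\, ds : \psi = \mathcal{G}^0(\int_0^\cdot h\, ds)\}$
provided:
\textbf{(i)} for every $M < \infty$, the level set $K_M = \{\mathcal{G}^0(\int_0^\cdot h\, ds) : h \in S_M\}$ is a compact subset of $X$; and
\textbf{(ii)} for every $M < \infty$ and every family $\{h_\e\} \subset \mathcal{A}_M$ converging in distribution (in the weak topology of $S_M$) to some $h$, one has $\mathcal{G}^\e(\sqrt{\e}W_\cdot + \int_0^\cdot h_\e\, ds) \to \mathcal{G}^0(\int_0^\cdot h\, ds)$ in distribution in $X$.

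Condition \textbf{(i)} is exactly the content of Lemma \ref{compact}, and condition \textbf{(ii)} is exactly the content of Lemma \ref{weakconv}. Therefore the second step reduces to invoking those two lemmas directly, after identifying $\mathcal{G}^\e$ (resp.\ $\mathcal{G}^0$) as the solution map of \eqref{Benard2} (resp.\ \eqref{dcontrol2}) furnished by Theorem \ref{wellposeness}; well-posedness under Assumptions (A.1)--(A.3) guarantees that these maps are well-defined Borel measurable functions from $C([0,T];\mathbb{H})$ into $X$.

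The only genuinely delicate point has already been handled inside Lemmas \ref{weakconv} and \ref{compact}: namely the passage to the limit in the nonlinear transport term and in the product $\sigma(q^\e)h_\e$, which are not jointly continuous in their arguments under the weak topology on $\mathcal{A}_M$. This is treated there by a time-discretization argument combined with the compactness of $\sigma(q(t_k))$ and the uniform bound \eqref{control-norm2} from Theorem \ref{wellposeness}. Once these two lemmas are in hand, the conclusion of the theorem is immediate: goodness of $I_\xi$ follows from \textbf{(i)} because $K_M = \{I_\xi \leq M\}$ up to the constant $\tfrac{1}{2}$, and the Laplace principle (equivalently the LDP upper and lower bounds) follows from the Budhiraja--Dupuis theorem together with \textbf{(ii)}. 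The convention $\inf \emptyset = +\infty$ makes $I_\xi$ well-defined on all of $X$, completing the proof.
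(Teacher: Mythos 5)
Your proposal follows exactly the same route as the paper: the well-posedness of the solution map (Theorem~\ref{wellposeness}), the compactness of $K_M$ (Lemma~\ref{compact}), and the weak convergence of $q_{h_\e}$ to $q_h$ (Lemma~\ref{weakconv}) are combined with the Budhiraja--Dupuis criterion from \cite{BD00,BD07} to deduce the Laplace principle, and hence the LDP with the stated good rate function. The paper's own proof is precisely this one-line appeal to the two lemmas and to Theorem~4.4 of \cite{BD00} or \cite{BD07}; your additional remarks on the variational representation and the time-discretization step are accurate elaborations of what those lemmas already contain.
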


\begin{proof}
Lemma \ref{compact} and Lemma \ref{weakconv} imply that $\{q^\e\}$
satisfies the Laplace principle which is equivalent to the large
deviation principle in $X=C([0, T]; {\mathbb{H}}) \cap L^2((0, T);
{\mathbb{H}}^1_0))$ with the above-mentioned rate function;
 see Theorem 4.4 in \cite{BD00} or
Theorem 4.4 in \cite{BD07}.
\end{proof}


\bigskip

{\bf Acknowledgement.}\\
Jinqiao Duan would like to thank Annie Millet for very helpful
discussions. This work was partly supported by the NSF Grant
0620539, the Cheung Kong Scholars Program and the K. C. Wong
Education Foundation,  NSF of China (No.10571087, No. 10871097),
SRFDP No. 20050319001, a Jiangsu Province NSF Grant BK2006523 and
the Teaching and Research Award Program for Outstanding Young
Teachers in Nanjing Normal University (2005--2008).


\end{document}